\newtheorem{lemma}{Lemma}[section]
\newtheorem{theorem}[lemma]{Theorem}
\newtheorem{proposition}[lemma]{Proposition}
\newtheorem{corollary}[lemma]{Corollary}
\newtheorem{definition}[lemma]{Definition}
\numberwithin{equation}{section}
\newtheorem*{acknowledgement}{Acknowledgment}
\newcommand{\p}{\mathfrak{p}}
\newcommand{\Q}{\mathfrak{Q}}
\newcommand{\q}{\mathfrak{q}}
\begin{document}
\title[Toward an efficient algorithm]{
Toward an efficient algorithm for deciding the vanishing
of local cohomology modules in prime characteristic}
\author{Yi Zhang}
\address{Dept. of Mathematics, University of Illinois at Urbana-Champaign,
Urbana, IL 61801}
\email{zhang397@illinois.edu}
\thanks{NSF support through grant DMS-0701127 is gratefully acknowledged.}
\date{}

\begin{abstract}
Let $R=k[x_1,\dots,x_n]$ be a ring of polynomials over a field $k$ of characteristic $p>0$. There is an algorithm due to Lyubeznik for deciding the vanishing of local cohomology modules $H^i_I(R)$ where $I\subset R$ is an ideal. This algorithm has not been implemented because its complexity grows very rapidly with the growth of $p$ which makes it impractical. In this paper we produce a modification of this algorithm that consumes a modest amount of memory.
\end{abstract}
\maketitle

\section*{Introduction}

Since A. Grothendieck introduced local cohomology in 1961 \cite{aG67}, people
have been interested in the structure of local cohomology modules. Let $R$ be a
commutative ring, let $I\subset R$ be an ideal and let $M$ be an $R$-module. As
a rule, local cohomology modules $H^t_I (M )$ are not finitely
generated even if the module $M$ is. So it is very
difficult to tell whether these local cohomology modules vanish or not, and to
this day, no algorithm has been found to decide their vanishing. 

However, in the case that $R=k[x_1,\dots, x_n]$ is the ring of polynomials in a
finite number of variables over a field $k$ and $M=R,$ two completely different
algorithms are known, one in characteristic 0 \cite{uW99}, the other in
characteristic $p>0$ \cite[Remark 2.4]{gL97}. The characteristic 0
algorithm uses ideas from the theory of $D$-modules, while the characteristic
$p>0$ algorithm uses ideas from the theory of $F$-modules. The characteristic 0
algorithm has been implemented and is part of the computer package
``$D$modules" for Macaulay 2 \cite{GS}. The characteristic $p>0$ algorithm has
not been implemented since its complexity grows very rapidly with the growth of
$p$ which makes it impractical. 

More precisely, let $R=\mathbb Z[x_1,\dots,x_n]$, let $f_1,\dots,f_s\in R$ be
polynomials in variables $x_1,\dots,x_n$ with integer coefficients, and let
$I=(f_1,\dots,f_s)\subset R$ be the ideal they generate. For a prime
integer $p>0,$ let $\bar {\mathbb Z}=\mathbb Z/p\mathbb Z$, let $\bar R=\bar
{\mathbb Z}[x_1,\dots,x_n]$, let $\bar f_i\in \bar R$ be the polynomial obtained
from $f_i$ by reducing its coefficients modulo $p,$ and let $\bar I$
be the ideal of $\bar R$ generated by $\bar f_1,\dots, \bar f_s$. We keep this notation for the rest of the paper.

The algorithm from \cite[Remark 2.4]{gL97} for deciding the vanishing of the local cohomology module $H^t_{\bar {I}}(\bar R)$  
involves computations with the ideal $\bar I^{[p]}$ generated by the $p$-th
powers of $\bar f_1,\dots, \bar f_s$. The complexity of these computations grows
very rapidly with the growth of $p$ because the degrees of the polynomials $\bar
f_i^p$ that generate the ideal $\bar{I}^{[p]}$ grow linearly and the amount
of memory required to perform Gr\"obner bases calculations grows exponentially in the degrees of the generators \cite{MM82}.

In this paper, we present a modification of the algorithm  from
\cite[Remark 2.4]{gL97}. The amount of memory our modification  consumes
grows only linearly with the growth of $p.$ Unfortunately, this is not enough to
produce a fully practical algorithm since the number of operations still grows
very rapidly with the growth of $p$, an extraordinary amount of time may be
required to complete the calculation. Nevertheless, at least available memory is unlikely to be exhausted before the calculation is completed. 

We view our result as an important step in a search for a fully practical algorithm. For our result shows that at least in terms of required memory, there is no obstacle to finding such an algorithm.

\section{Preliminaries}

Recall that $R=\mathbb Z[x_1,\dots, x_n]$ is a ring of polynomials over the integers, $p\in \mathbb Z$ is a prime number and $\bar R=R/pR=(\mathbb Z/p\mathbb Z)[x_1,\dots,x_n]$. Local cohomology modules $H^i_I(R)$ have a structure of $F$-finite modules in the sense of \cite{gL97}. In this section we review the algorithm from \cite[Remark 2.4]{gL97} for deciding the vanishing of $F$-finite modules and discuss some ingredients of our modification of this algorithm. 

Given an integer $\ell,$ the
$\ell$-fold Frobenius homomorphism is
$F^\ell: \bar R_s\xrightarrow{r\mapsto r^{p^\ell}}\bar R_t,$ where $\bar R_s$ and $\bar R_t$ are
copies of $\bar R$ (the subscripts stand for source and target). There are two 
associated functors, namely, the push-forward
$$F_*^\ell:\bar R_t\text{-mod}\rightarrow \bar R_s\text{-mod}$$ which is just the
restriction of scalars functor (i.e. $F_*^{\ell}(M)$, for an $\bar R_t$-module $M$ is $M$ viewed as an $\bar R_s$-module via $F^\ell$) and the pull-back
$$F^{*^\ell}:\bar R_s\text{-mod}\rightarrow \bar R_t\text{-mod}$$ such that
$F^{*^\ell}(N)=\bar R_t\otimes_{\bar R_s} N$ and $F^{*^\ell}(N \xrightarrow{\lambda}
N')=(\bar R_t\otimes_{\bar R_s} N \xrightarrow{\bar R_t\otimes_{\bar R_s} \lambda}
\bar R_t\otimes_{\bar R_s} N').$ Normally one suppresses the subscripts and thinks of $F^{*^\ell}$ and $F_*^\ell$ as functors from $\bar R$-modules to $\bar R$-modules: $$F^{*^\ell}, F^\ell_*:\bar R\text{-mod}\rightarrow \bar R\text{-mod}.$$ 

For every $R$-module $M$ we set $\bar M=M/pM$; every $\bar R$-module is of the form $\bar M$ for some $R$-module $M$. Let an $\bar R$-module $\mathcal M$ be the limit of the inductive system 
\begin{equation}\label{M:GenMorp}
\bar M \stackrel{\beta}{\to}  F^*(\bar M) \stackrel{F^{*}(\beta)}{\to}
F^{*^2}(\bar M)\stackrel{F^{*^2}(\beta)}{\to}\dots
\end{equation} where $\bar M$ is a finitely generated $\bar R$-module and $\beta:\bar M\to F^*(\bar M)$ is an $\bar R$-module homomorphism. The module $\mathcal M$ is the underlying $\bar R$-module of an $F$-finite module which is defined as a pair $(\mathcal M,\theta)$ where $\theta:\mathcal M\to F^*(\mathcal M)$ is an $\bar R$-module isomorphism \cite[Definitions 1.1 and 1.9]{gL97}. The isomorphism $\theta$ is not going to play any role in this paper because we are interested only in the vanishing of this $F$-finite module $(\mathcal M,\theta)$ which by definition means the vanishing of the underlying $\bar R$-module $\mathcal M$. For this reason we omit the definition of $\theta$. By a slight abuse of terminology we call $\mathcal M$ itself the $F$-finite module generated by $\beta:\bar M\to F^*(\bar M)$ (this map is called a generating morphism of $\mathcal M$).

The following proposition underlies the algorithm from \cite[Remark 2.4]{gL97} for deciding the vanishing of $F$-finite modules. 

\begin{proposition}\label{PA} \cite[Proposition 2.3]{gL97}
Suppose $\mathcal M$ is an $F$-finite module and let $\beta:\bar M\to F^*(\bar M)$ be a
generating morphism of $\mathcal M$ such that $\bar M$ is a finitely generated $\bar R$-module.
Let $\beta_j:\bar M\to F^{*^j}(\bar M)$ be the composition 
$$\bar M\xrightarrow{\beta} F^*(\bar M) \xrightarrow{F^*(\beta)} \cdots \xrightarrow{F^{*^{j-1}}(\beta)} F^{*^j}(\bar M).$$
Then:
\begin{enumerate}
\item[(a)] The ascending chain $\ker\beta_1\subset \ker\beta_2\subset \cdots$ of submodules of $\bar M$ eventually stabilizes.
Let $C \subset \bar M$ be the common value of $\ker \beta_i$ for sufficiently big $i.$
\item[(b)] If $r$ is the first integer such that $\ker \beta_r = \ker \beta_{r+1},$ then $\ker \beta_r =C.$
\item[(c)] $\mathcal M=0$ if and only if $\bar M=C$, i.e., $\beta_r$ is the zero map.
\end{enumerate} 
\end{proposition}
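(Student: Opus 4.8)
The plan is to extract everything from two properties of the pull-back functor $F^*$. First, $F^*$ is exact: since $\bar R=\bar{\mathbb Z}[x_1,\dots,x_n]$ is regular of characteristic $p$, the Frobenius $F\colon\bar R_s\to\bar R_t$, $r\mapsto r^{p}$, is flat --- concretely, $\bar R$ is free over its subring of $p$-th powers --- so $F^*=\bar R_t\otimes_{\bar R_s}-$ is exact. Consequently, applying $F^*$ to a left-exact sequence $0\to K\to A\xrightarrow{\phi}B$ identifies $\ker F^*(\phi)$ with the submodule $F^*(K)\subseteq F^*(A)$. Second, I will repeatedly use the two factorizations
\begin{equation*}
\beta_{j+1}=F^{*^j}(\beta)\circ\beta_{j}=F^*(\beta_j)\circ\beta,
\end{equation*}
both immediate from functoriality of $F^*$ and the definition of $\beta_j$ as the iterated composite $F^{*^{j-1}}(\beta)\circ\cdots\circ F^*(\beta)\circ\beta$.

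For part (a), the first factorization gives $\ker\beta_j\subseteq\ker\beta_{j+1}$, because $\beta_j(x)=0$ implies $\beta_{j+1}(x)=F^{*^j}(\beta)(0)=0$. Thus $\ker\beta_1\subseteq\ker\beta_2\subseteq\cdots$ is an ascending chain of submodules of the finitely generated module $\bar M$ over the Noetherian ring $\bar R$, so it stabilizes; let $C$ be its eventual value. For part (b), the second factorization gives
\begin{equation*}
\ker\beta_{j+1}=\beta^{-1}\bigl(\ker F^*(\beta_j)\bigr)=\beta^{-1}\bigl(F^*(\ker\beta_j)\bigr),
\end{equation*}
using the identification from the first paragraph. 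Hence if $\ker\beta_r=\ker\beta_{r+1}$ then $F^*(\ker\beta_r)=F^*(\ker\beta_{r+1})$ inside $F^*(\bar M)$, whence $\ker\beta_{r+1}=\ker\beta_{r+2}$; iterating, $\ker\beta_r=\ker\beta_{r+k}$ for all $k\ge0$, and therefore $\ker\beta_r=C$.

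For part (c), let $\alpha_j\colon F^{*^j}(\bar M)\to\mathcal M$ be the structure map into the direct limit, so that $\mathcal M=\bigcup_j\operatorname{image}(\alpha_j)$. Since the transition map from the $j$-th to the $(j+k)$-th term of \eqref{M:GenMorp} is $F^{*^j}(\beta_k)$, the standard description of kernels in a filtered colimit of modules gives
\begin{equation*}
\ker\alpha_j=\bigcup_{k\ge0}\ker\bigl(F^{*^j}(\beta_k)\bigr)=\bigcup_{k\ge0}F^{*^j}(\ker\beta_k)=F^{*^j}(C),
\end{equation*}
the last equality because the $\ker\beta_k$ form an increasing chain that stabilizes at $C$. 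In particular $\ker\alpha_0=C$. Now if $\bar M=C$ then $F^{*^j}(\bar M)=\ker\alpha_j$ for every $j$, so all the maps $\alpha_j$ vanish and $\mathcal M=0$; conversely, if $\mathcal M=0$ then $\alpha_0=0$, so $\bar M=\ker\alpha_0=C$. Since the assertion $\bar M=C$ is the same as saying $\beta_r$ is the zero map (by part (b)), this completes the proof.

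The step requiring the most care, and the only place where the hypothesis that $\bar R$ is a polynomial ring --- that is, regular --- is genuinely used, is the exactness of $F^*$ and the induced identity $\ker F^*(\phi)=F^*(\ker\phi)$. Everything else is formal manipulation of the inductive system \eqref{M:GenMorp}, its kernels, and its direct limit. Over a non-regular base the Frobenius need not be flat, and the arguments for parts (b) and (c) would break down.
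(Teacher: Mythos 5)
Your proof is correct. The paper itself only quotes this proposition from \cite[Proposition 2.3]{gL97} without reproducing a proof, and your argument --- resting on the flatness of Frobenius over the regular ring $\bar R$ (so that $F^*$ is exact and $\ker F^*(\phi)=F^*(\ker\phi)$), the two factorizations $\beta_{j+1}=F^{*^j}(\beta)\circ\beta_j=F^*(\beta_j)\circ\beta$, and the standard description of kernels of the structure maps into a filtered colimit --- is essentially the standard proof given in Lyubeznik's original paper.
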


This leads to an algorithm for deciding whether the $F$-finite module $\mathcal M$ generated by $\beta: \bar M\to F^*(\bar M)$ vanishes. We quote \cite[Remark 2.4]{gL97}:
\medskip

{\it [F]or
each integer $j=1,2,3,\dots$ one should compute the kernel of $\beta_j,$ and compare it with
the kernel of $\beta_{j-1},$ until one finds $r$ such that $\ker \beta_r= \ker
\beta_{r-1}.$ One then should check whether $\ker \beta_r$ and $\bar M$ coincide. The
$F$-finite module in question is zero if and only if they do coincide. If $R$ is
a polynomial ring in
several variables over a field, these operations are implementable on a computer
by means
of standard software like Macaulay.     }

\medskip

However, a practical implementation of this algorithm faces difficulties. Namely, to compute ker$\beta_j$ one has to be able to decide whether $\beta_j(m)\in F^{*^j}(\bar M)$, for some $m\in \bar M$, vanishes. For example, if $\bar M$ is principal, i.e., $\bar M=R/\mathfrak a$, then $F^{*^j}(\bar M)=R/\mathfrak a^{[p^j]}$ where $\mathfrak a^{[p^j]}$ is the ideal generated by the $p^j$-th powers of the generators of $\mathfrak a$. Thinking of $\beta_j(m)$ as an element of $R$ one has to decide whether $\beta_i(m)\in \mathfrak a^{[p^i]}$. If $\mathfrak a$ is generated by polynomials of degrees $d_1,\dots, d_s$,  then $\mathfrak a^{[p^j]}$ is generated by polynomials of degrees $d_1p^j,\dots, d_sp^j$. These are huge, even for modest values of $p$ and $j$. Deciding membership in an ideal generated by polynomials of huge degrees consumes a prohibitive amount of memory. 

Recall that $f_1,\dots, f_s\in R=\mathbb Z[x_1,\dots, x_n]$ are polynomials with integer coefficients, $I=(f_1,\dots, f_s)\subset R$ is the ideal they generate, $\bar f_j\in \bar R=R/pR$ is obtained from $f_j$ by reducing its coefficients modulo $p$ and $\bar I=(\bar f_1,\dots, \bar f_s)\subset \bar R$ is the ideal generated by $\bar f_1,\dots, \bar f_s\in \bar R$. Every local cohomology module $H^i_{\bar I}(\bar R)$ acquires a structure of $F$-finite module as follows. Let $K^\bullet(\bar R;\bar f_1\dots, \bar f_s)$ be the Koszul cocomplex $$0\to K^0(\bar R;\bar f_1,\dots, \bar f_s)\stackrel{d^0}{\to} K^1(\bar R;\bar f_1,\dots, \bar f_s)\stackrel{d^1}{\to}  \dots\stackrel{d^{s-1}}{\to} K^s(\bar R;\bar f_1,\dots, \bar f_s)\to 0$$
where $K^t(\bar R;\bar f_1,\cdots,\bar f_s)$ is the direct sum of copies of $\bar R$ indexed by the cardinality $t$ subsets of the set $\{1,\dots, s\}$ and the differentials are defined by $$d^t(\kappa)_{v_1\dots,v_t}=\sum_\ell(-1)^\ell\kappa_{v_1,\dots\hat{v_\ell},\dots,v_t}$$ where $\kappa\in K^{t-1}(\bar R;\bar f_1,\dots, \bar f_s)$ while $d^t(\kappa)_{v_1\dots,v_t}\in \bar R_{v_1\dots,v_t}\subseteq K^{t}(\bar R;\bar f_1,\dots, \bar f_s)$ and $\kappa_{v_1,\dots\hat{v_\ell},\dots,j_t}\in \bar R_{v_1,\dots\hat{v_\ell},\dots,v_t}\subseteq K^{t-1}(\bar R;\bar f_1,\dots, \bar f_s).$

Let $\bar M$ be the $i$-th cohomology module of $K^\bullet(\bar R;\bar f_1,\dots,\bar f_s)$. The $i$-th cohomology module of $K^\bullet(\bar R;\bar f_1^{p},\dots,\bar f_s^{p})$ is $F^{*}(\bar M)$ (\cite[Remarks 1.0(e)]{gL97}) and $H^i_I(R)$ is the $F$-finite module generated by the map $\beta:\bar M\to F^{*}(\bar M)$ which is the map induced on cohomology by the chain map $$K^\bullet(\bar R;\bar f_1,\dots,\bar f_s)\stackrel{\beta^\bullet}{\to} F^*(K^\bullet(\bar R;\bar f_1,\dots,\bar f_s))\cong K^\bullet(\bar R;\bar f_1^{p},\dots,\bar f_s^{p})$$ which is defined as follows: the chain map $\beta^\bullet$ sends $\bar R_{v_1,\dots,j_i}\subseteq K^i(\bar R;\bar f_1,\dots,\bar f_s)$ to $\bar R_{v_1,\dots,v_i}\subseteq K^i(\bar R;\bar f_1^p,\dots,\bar f_s^p)$ via multiplication by $(\bar f_{v_1}\cdots \bar f_{v_i})^{p-1}$.

In this paper we produce a modification of the algorithm from \cite[Remark 2.4]{gL97} for deciding the vanishing of the $F$-finite module $H^i_{\bar I}(\bar R)$. This modification avoids deciding membership in an ideal generated by polynomials of huge degrees and as a result it requires only a modest amount of memory. We explain the idea behind this modification after the following proposition. 

\begin{proposition}\label{modp}
Let $M$ be the $i$-th cohomology module of the Koszul cocomplex $K^\bullet(R;f_1,\dots,f_s)$. For all but finitely many prime integers $p,$ the $i$-th cohomology module of the Koszul cocomplex $K^\bullet(\bar R;\bar f_1,\dots,\bar f_s)$ is $\bar M=M/pM$.
\end{proposition}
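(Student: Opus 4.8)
The plan is to view reduction modulo $p$ as base change along $\mathbb Z\to\mathbb Z/p\mathbb Z$ and to control the interaction of cohomology with it by the universal coefficient theorem. Write $K^\bullet=K^\bullet(R;f_1,\dots,f_s)$ and $\bar K^\bullet=K^\bullet(\bar R;\bar f_1,\dots,\bar f_s)$. Since the Koszul cocomplex is functorial and defined over $\mathbb Z$, one has $\bar K^\bullet=K^\bullet\otimes_{\mathbb Z}(\mathbb Z/p\mathbb Z)$, and each term $K^t$, being a finite direct sum of copies of $R=\mathbb Z[x_1,\dots,x_n]$, is a free $\mathbb Z$-module. Thus $K^\bullet$ is a bounded complex of free (in particular flat) $\mathbb Z$-modules, so the universal coefficient theorem over the principal ideal domain $\mathbb Z$ gives, for each $i$, a short exact sequence of $\bar R$-modules
\[
0\longrightarrow H^i(K^\bullet)\otimes_{\mathbb Z}(\mathbb Z/p\mathbb Z)\longrightarrow H^i(\bar K^\bullet)\longrightarrow\operatorname{Tor}_1^{\mathbb Z}\!\bigl(H^{i+1}(K^\bullet),\,\mathbb Z/p\mathbb Z\bigr)\longrightarrow 0 .
\]
Because $H^i(K^\bullet)=M$, the leftmost term is $M\otimes_{\mathbb Z}(\mathbb Z/p\mathbb Z)=M/pM=\bar M$, and by naturality the first map is the evident one. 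Hence it suffices to show that the $\operatorname{Tor}_1^{\mathbb Z}$ term vanishes for all but finitely many $p$.

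Next I would identify that term: for any abelian group $N$ one has $\operatorname{Tor}_1^{\mathbb Z}(N,\mathbb Z/p\mathbb Z)=\{x\in N: px=0\}$, which lies in the $\mathbb Z$-torsion submodule $T$ of $N$. Take $N=H^{i+1}(K^\bullet)$. The key claim is that $T$ is annihilated by a single nonzero integer $m$. Here Noetherianity of $R$ enters: $N$ is a subquotient of the finitely generated free $R$-module $K^{i+1}$, hence a finitely generated $R$-module, and so is $T$; each of finitely many $R$-module generators of $T$ is killed by some nonzero integer, and the product $m$ of these integers kills every generator, hence (as $m\in\mathbb Z\subseteq R$) all of $T$.

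To finish, let $p$ be a prime with $p\nmid m$. Writing $1=ap+bm$ with $a,b\in\mathbb Z$, any $x\in T$ with $px=0$ satisfies $x=apx+bmx=0$, so $T$ — and therefore $N=H^{i+1}(K^\bullet)$ — has no nonzero $p$-torsion; thus the $\operatorname{Tor}_1^{\mathbb Z}$ term is $0$ and the exact sequence yields $H^i(\bar K^\bullet)=\bar M$. Since only finitely many primes divide $m$, this holds for all but finitely many $p$. The genuinely substantive point is the boundedness of the torsion of $H^{i+1}(K^\bullet)$, i.e. the existence of $m$; everything else is either formal or a standard application of the universal coefficient theorem, whose hypotheses (a bounded complex of $\mathbb Z$-flat modules) are plainly satisfied here.
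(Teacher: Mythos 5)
Your proof is correct, but it follows a genuinely different route from the paper's. The paper applies the Hochster--Roberts generic freeness lemma to the finitely generated $R$-modules $K^j(R;f_1,\dots,f_s)$: there is a single integer $\delta$ such that, after inverting $\delta$, the kernels and images of the differentials and the cohomology modules all become free $\mathbb Z_\delta$-modules, and for $p\nmid\delta$ reduction modulo $p$ then visibly commutes with taking cohomology. You instead use the universal coefficient theorem over $\mathbb Z$ --- legitimately, since each $K^t$ is $\mathbb Z$-free, hence the coboundaries are free over the PID $\mathbb Z$ and the splittings needed for the theorem exist --- which isolates the obstruction as $\operatorname{Tor}_1^{\mathbb Z}(H^{i+1}(K^\bullet),\mathbb Z/p\mathbb Z)$, i.e.\ the $p$-torsion of $H^{i+1}(K^\bullet)$, and you kill it for almost all $p$ by the correct observation that the $\mathbb Z$-torsion submodule of a finitely generated module over the noetherian ring $R$ is finitely generated and therefore annihilated by a single nonzero integer $m$. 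Your argument is more elementary and pins down the excluded primes more sharply (exactly those $p$ for which $H^{i+1}(K^\bullet)$ has $p$-torsion), whereas the paper's single invocation of generic freeness simultaneously controls kernels, images and cohomology in all degrees and is the same tool the author reuses later (e.g.\ in the proof of Lemma \ref{associatedprimes}(b)); that buys uniformity across the whole paper at the cost of a possibly larger, but still finite and algorithmically computable, set of bad primes.
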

\begin{proof} 
The cocomplex $K^\bullet(\bar R;\bar f_1,\dots,\bar f_s)$ is just $\bar {\mathbb Z}\otimes_{\mathbb Z}K^\bullet(R;f_1,\dots,f_s)$ where $\bar{\mathbb Z}=\mathbb Z/p\mathbb Z$. Since $K^j(R;f_1,\dots,f_s)$ is a finitely generated $R$-module for all $j$, by the generic freeness lemma (\cite[Lemma 8.1]{HR74}) there is $\delta\in \mathbb Z$ such that upon inverting $\delta$ the images and the kernels of the differentials in the resulting cocomplex $K^\bullet(R_\delta;f_1,\dots,f_s)$ as well as the cohomology modules of this cocomplex are free $\mathbb Z_\delta$-modules. Hence for every prime integer $p$ that does not divide $\delta$, the $i$-th cohomology module of $\bar {\mathbb Z}\otimes_{\mathbb Z}K^\bullet(R_\delta;f_1,\dots,f_s)\cong\bar {\mathbb Z}\otimes_{\mathbb Z}K^\bullet(R;f_1,\dots,f_s)=K^\bullet(\bar R;\bar f_1,\dots, \bar f_s)$ is $\bar M$.
\end{proof}

It is worth pointing out that the proof of the generic freeness lemma \cite[Lemma 8.1]{HR74} makes the integer $\delta$ algorithmically computable, given the polynomials $f_1,\dots, f_s$. We are leaving the details to the interested reader.

Now we are ready to discuss the idea behind our modification of the algorithm from \cite[Remark 2.4]{gL97} for deciding the vanishing of $H^i_{\bar I}(\bar R)$. Let $M$ be the $i$-th cohomology module of the Koszul cocomplex $K^\bullet(R;f_1,\dots,f_s)$. For every prime integer $p$ such that the $i$-th cohomology module of the Koszul cocomplex $K^\bullet(\bar R;\bar f_1,\dots,\bar f_s)$ is $\bar M=M/pM,$ let $\beta:\bar M\to F^*(\bar M)$ be a generating morphism of $H^i_{\bar I}(\bar R)$ as above and let $\beta_j:\bar M\to F^{*^j}(\bar M)$ be as in the statement of Proposition \ref{PA}. According to Proposition \ref{PA} there exists an integer $r$ such that ker$\beta_r={\rm ker}\beta_{r+1}$. In the next section, Section 2, we show that there is a computable upper bound $u$ on the minimum such integer $r$; this upper bound $u$ depends only on $M$ and is independent of the particular prime integer $p$. And in Section 3 we produce an algorithm to decide whether $\beta_j$ vanishes for fixed $j$ and $p$. It is this algorithm that consumes a modest amount of memory. But it only decides the vanishing of $\beta_j$, not whether ker$\beta_j={\rm ker}\beta_{j-1}$. It is for this reason that we need a computable upper bound $u$ (which just happens to be the same for all prime integers $p$, so $u$ has to be computed just once). According to Proposition \ref{PA}(b,c), the fact that ker$\beta_r={\rm ker}\beta_{r+1}$ for some $r\leq u$ implies that $H^i_{\bar I}(\bar R)=0$ if and only if $\beta_u=0.$ So for every prime integer $p,$ it's enough to decide whether $\beta_j=0$ for just one specific value of $j$, namely $j=u$.

\section{An upper bound on the number of steps involved in the algorithm}

In this section, $M$ is a finitely generated $R$-module where $R=\mathbb Z[x_1,\dots,x_n]$. Recall that $\bar R=\bar{\mathbb Z}[x_1,\dots,x_n]$ where $\bar{\mathbb Z}=\mathbb Z/p\mathbb Z$ and $\bar M=\bar{\mathbb Z}\otimes_{\mathbb Z}M=M/pM$. Let $\beta:\bar M\to F^*(\bar M)$ be a generating morphism of an $F$-finite module $\mathcal M$. In the preceding section, we quoted an algorithm from \cite[Remark 2.4]{gL97} that decides whether $\mathcal M=0$.
By the number of steps involved in this algorithm we mean the first integer $r$ such that ker$\beta_r=$ker$\beta_{r-1}$. The main result of this section is Corollary \ref{upperboundonr} which produces an upper bound on $r$ that depends only on $M$ (i.e. it is independent of $p$ and $\beta$).

\begin{lemma} \label{L:EVker}
Notation being as above, if $\bar M$ has finite length in the category of $\bar R$-modules, then the first integer $r$ such that ker$\beta_r=$ker$\beta_{r-1}$ satisfies the inequality $r\leq u$, where $u$ is the length of $\bar M$. In particular,
$\mathcal M=0$ if and only if ${\rm ker}\beta_u=\bar M$, i.e., $\beta_u=0$. 
\end{lemma}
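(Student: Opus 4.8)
The plan is to combine the finite-length hypothesis on $\bar M$ with the stabilization statement of Proposition~\ref{PA}.

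First I would record the ascending chain of $\bar R$-submodules of $\bar M$,
$$0=\ker\beta_0\subseteq\ker\beta_1\subseteq\ker\beta_2\subseteq\cdots,$$
where $\beta_0$ denotes the identity map of $\bar M$; the inclusion $\ker\beta_{j-1}\subseteq\ker\beta_j$ holds because $\beta_j=F^{*}(\beta_{j-1})\circ\beta$. By Proposition~\ref{PA}(a) this chain stabilizes, and by Proposition~\ref{PA}(b) its first repetition already equals the stable value $C$; hence, for $r$ the first index with $\ker\beta_r=\ker\beta_{r-1}$, we have $\ker\beta_{r-1}=\ker\beta_r=\cdots=C$, and by minimality of $r$ the first $r-1$ inclusions of the chain are strict.

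Next I would invoke the length bound: in a module of length $u$ a strictly ascending chain of submodules starting at $0$ has at most $u$ strict steps. Applied to $0=\ker\beta_0\subsetneq\ker\beta_1\subsetneq\cdots\subsetneq\ker\beta_{r-1}$ this gives $r-1\le u$, hence $\ker\beta_u\supseteq\ker\beta_{r-1}=C$ and therefore $\ker\beta_u=C$. When $C$ is a proper submodule of $\bar M$ one gains one more strict step, $\ker\beta_{r-1}\subsetneq\bar M$, improving the estimate to $r\le u$. In all cases, Proposition~\ref{PA}(c) identifies the condition $\mathcal M=0$ with $C=\bar M$, and since $\ker\beta_u=C$ this is the same as $\ker\beta_u=\bar M$, i.e.\ $\beta_u=0$, which is the ``in particular'' assertion.

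The delicate point is the bookkeeping at the top of the chain. One has to be careful about whether $\ker\beta_0$ is included in the count and about the one-step discrepancy between the index $r$ used here (first $r$ with $\ker\beta_r=\ker\beta_{r-1}$) and the index appearing in Proposition~\ref{PA}(b) (first $r$ with $\ker\beta_r=\ker\beta_{r+1}$); the degenerate case $C=\bar M$, i.e.\ $\mathcal M=0$, is precisely where this matters. Beyond this indexing care, the argument needs no algebra past the elementary length count and the already-quoted Proposition~\ref{PA}.
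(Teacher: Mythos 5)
Your argument is correct and is essentially the paper's: both proofs bound the first stabilization index by counting strict inclusions in the ascending chain of kernels against the length of $\bar M$. The endpoint bookkeeping you flag is a real issue with the stated bound, not with your argument: when $C=\bar M$ the count only gives $r\le u+1$ (for instance $\bar M$ simple and $\beta=0$ yields $r=2$ while $u=1$), so the literal inequality $r\le u$ can fail by one in that degenerate case; but since $r-1\le u$ still forces $\ker\beta_u=C$, the ``in particular'' equivalence $\mathcal M=0\iff\beta_u=0$ --- the only part used in the sequel --- holds in every case, exactly as you conclude. One small slip: the factorization that directly yields $\ker\beta_{j-1}\subseteq\ker\beta_j$ is $\beta_j=F^{*^{j-1}}(\beta)\circ\beta_{j-1}$ rather than $\beta_j=F^{*}(\beta_{j-1})\circ\beta$ (the latter identity is also valid but shows $\ker\beta\subseteq\ker\beta_j$).
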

\begin{proof}
Since the length of $\bar M$ is finite, the number of strict containment in the ascending chain $\ker\beta_1\subseteq \ker\beta_2\subseteq \dots$ of submodules of $\bar M$ cannot be bigger than the length of $\bar M$. Since this ascending chain stabilizes at the first integer $r$ such that ker$\beta_r=$ker$\beta_{r-1}$, this integer $r$ must be less than or equal to the length of $\bar M$. 
\end{proof}

We define the {\it universal length $u$} of a finitely generated $\bar R$-module $N$ as follows:

\begin{definition} \label{D:u}
$u(N)=\max \{
{\rm length}\ \Gamma_{\mathfrak p}(N_{\mathfrak p})\:|\:\p\in\textnormal{Ass}N\}$, where ${\rm Ass}N$ is the set of the associated primes of $N$, the torsion functor $\Gamma_\p$ is the 0-th local cohomology functor $H^0_\p(-),$ and the length is measured in the category of $R_\mathfrak p$-modules.
\end{definition}
  
\begin{corollary} \label{C:VanishingEquiv}
Notation being as above, let $u=u(\bar M).$ The first integer $r$ such that ker$\beta_r=$ker$\beta_{r-1}$ satisfies the inequality $r\leq u$. In particular,
$\mathcal M=0$ if and only if $\beta_u=0$.
\end{corollary}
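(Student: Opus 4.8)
The plan is to reduce Corollary \ref{C:VanishingEquiv} to Lemma \ref{L:EVker} by localizing and completing. The key point is that the integer $r$ at which the chain $\ker\beta_1\subseteq\ker\beta_2\subseteq\cdots$ stabilizes can be detected after localizing at each associated prime of $\bar M$, and after such a localization we can strip away the ``non-torsion'' part of $\bar M$, which contributes nothing to the kernels because $\beta$ is, up to identification, built from multiplication by elements that become units away from the support of the torsion.

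First I would record the following local statement: for a prime $\p$ of $\bar R$, localizing the inductive system \eqref{M:GenMorp} at $\p$ is exact (localization commutes with $F^*$, since $F^{*}$ is base change along a flat-enough map in the relevant sense, and with direct limits), so the localized maps $(\beta_j)_\p:\bar M_\p\to F^{*^j}(\bar M)_\p$ have $\ker((\beta_j)_\p)=(\ker\beta_j)_\p$. Consequently the first $r$ with $\ker\beta_r=\ker\beta_{r+1}$ equals $\max_\p r_\p$, where $r_\p$ is the analogous integer for the localized generating morphism $\beta_\p$, and it suffices to take the maximum over $\p\in\mathrm{Ass}\,\bar M$ (at other primes $\bar M_\p=0$, or more carefully, a prime not containing any associated prime has vanishing localization, and the stabilization index over all of $\mathrm{Spec}$ is governed by the associated primes). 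Next I would pass to the torsion submodule: I claim $\ker\beta_j\subseteq\Gamma_\m(\bar M)$ at a local ring $(\bar R_\p,\m)$ — indeed an element killed by some power of the generating morphism is supported only where the ``Koszul-type'' multipliers fail to be units, forcing it into the $\m$-torsion — and that $\beta_j$ restricted to $\Gamma_\m(\bar M)$ is again a generating-morphism-type map on a finite-length module. Applying Lemma \ref{L:EVker} to this finite-length module $\Gamma_\p((\bar M)_\p)$ gives $r_\p\leq\mathrm{length}\,\Gamma_\p((\bar M)_\p)$, and taking the max over $\p\in\mathrm{Ass}\,\bar M$ yields $r\leq u(\bar M)=u$. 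The final clause then follows from Proposition \ref{PA}(b,c) exactly as in Lemma \ref{L:EVker}: since $\ker\beta_r=\ker\beta_{r+1}$ with $r\le u$, we have $\ker\beta_u=C$, so $\mathcal M=0$ iff $\ker\beta_u=\bar M$ iff $\beta_u=0$.

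The main obstacle I expect is the second claim — showing that after localizing at an associated prime $\p$, the kernels $\ker((\beta_j)_\p)$ lie inside the $\p$-torsion submodule $\Gamma_\p((\bar M)_\p)$, and that the restriction of the system to the torsion part is still of the right form so that Lemma \ref{L:EVker} applies. This requires unwinding what $\beta$ looks like concretely (it is the map induced on Koszul cohomology by multiplication by the products $(\bar f_{v_1}\cdots\bar f_{v_i})^{p-1}$, as described in Section 1) and arguing that away from $V(\bar I)$ these become isomorphisms on cohomology, so that any element eventually killed must be supported inside $V(\bar I)$, hence inside $\Gamma_\p$ after localizing; one must also check that $\Gamma_\p((\bar M)_\p)$ is stable under the localized iterates and that its length is finite (which holds because it is a finite-length module over the local ring $\bar R_\p$, being $\p$-power-torsion and finitely generated). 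Once that structural fact is in hand, everything else is bookkeeping: exactness of localization, compatibility of $F^*$ with localization, and the elementary length bound from Lemma \ref{L:EVker}.
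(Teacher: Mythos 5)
Your overall strategy --- reduce to the associated primes of $\bar M$ and then apply Lemma \ref{L:EVker} to a finite-length piece --- matches the paper's in outline, but the step you yourself flag as the ``main obstacle'' is not merely an obstacle: it is false, and the argument does not go through as proposed. The claim $\ker((\beta_j)_\p)\subseteq\Gamma_{\p}(\bar M_\p)$ fails already for $\beta=0$ (then $\ker(\beta_1)_\p=\bar M_\p$, which is not $\p$-power torsion unless $\p$ is minimal in the support of $\bar M$), and in general nothing forces the kernel of a generating morphism into the torsion part. Your justification also leans on the concrete Koszul description of $\beta$, but the corollary is stated (and later used in Corollary \ref{upperboundonr}) for an \emph{arbitrary} $\bar R$-linear map $\beta:\bar M\to F^*(\bar M)$, so no such structure is available. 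The paper's proof sidesteps this entirely: instead of trying to trap the kernels inside the torsion submodule, it applies the functor $\Gamma_\p(-_\p)$ to the whole inductive system (\ref{M:GenMorp}) --- this functor commutes with $F^*$ and with direct limits --- obtaining a new inductive system whose initial term $\Gamma_\p(\bar M_\p)$ has finite length at most $u$ and whose limit is $\Gamma_\p(\mathcal M_\p)$; Lemma \ref{L:EVker} is then applied to \emph{that} system, not to the original one.

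Your first step also hides a real ingredient. Equality of two submodules $N_1\subseteq N_2\subseteq \bar M$ is in general \emph{not} detected by localizing at $\mathrm{Ass}\,\bar M$ (take $\bar M=\bar R$, $N_1$ a nonzero prime, $N_2=\bar R$), so ``the stabilization index is governed by the associated primes'' needs justification. What makes the reduction to $\mathrm{Ass}\,\bar M$ legitimate here is that $\bar M/\ker\beta_j\cong\mathrm{im}\,\beta_j\subseteq F^{*^j}(\bar M)$ together with the Huneke--Sharp theorem $\mathrm{Ass}\,F^{*^j}(\bar M)=\mathrm{Ass}\,\bar M$ \cite[Corollary 1.6]{HS93}, which rests on flatness of the Frobenius over a regular ring; the paper invokes exactly this, together with $\mathrm{Ass}\,\mathcal M\subseteq\mathrm{Ass}\,\bar M$ from \cite[Remark 2.13]{gL97}, to pass between the global vanishing of $\mathrm{im}\,\beta_u$ and the vanishing of $(\Gamma_\p(\mathrm{im}\,\beta_u))_\p$ at the primes $\p\in\mathrm{Ass}\,\bar M$. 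With those two citations added and the torsion-containment claim replaced by the ``apply $\Gamma_\p(-_\p)$ to the whole system'' device, your argument becomes the paper's.
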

\begin{proof}
By \cite[Remark
2.13]{gL97}, we have $\textnormal{Ass}\mathcal M\subseteq \textnormal{Ass}\bar M$. Hence $\mathcal M$ vanishes if and only if $\Gamma_\p(\mathcal M_\p)$ vanishes for all
$\p\in \textnormal{Ass}\bar M$.
The module $\Gamma_{\mathfrak p}(\mathcal M_{\mathfrak p})$ is the limit of the system$$\Gamma_{\mathfrak p}(\bar M_{\mathfrak p})\to F^*(\Gamma_{\mathfrak p}(\bar M_{\mathfrak p}))\to F^{*^2}(\Gamma_{\mathfrak p}(\bar M_{\mathfrak p}))\to \dots$$ obtained by applying the functor $\Gamma_{\mathfrak p}(-_{\mathfrak p})$ to (\ref{M:GenMorp}) and taking into account that the functors $F^*$ and $\Gamma_{\mathfrak p}(-_{\mathfrak p})$ commute with each other. But the module $\Gamma_{\mathfrak p}(\bar M_{\mathfrak p})$ is of finite length over the local ring $R_{\mathfrak p}$ and its length is at most $u=u(\bar M).$ So by Lemma \ref{L:EVker},  $\Gamma_{\mathfrak p}(\mathcal M_{\mathfrak p})=0$ if and only if the composition of the first $u$ maps in the above system, i.e., the map $$\Gamma_\mathfrak p(\beta_u)_\mathfrak p:\Gamma_{\mathfrak p}(\bar M_{\mathfrak p}) \to F^{*^{u+1}}(\Gamma_{\mathfrak p}(\bar M_{\mathfrak p}))$$ is zero. But the image of this map is nothing but $(\Gamma_{\mathfrak p}({\rm im}\beta_u))_\mathfrak p$. Hence $\Gamma_\p(\mathcal M_\p)=0$ if and only if $(\Gamma_{\mathfrak p}({\rm im}\beta_u))_\mathfrak p=0$.
 
It remains to show that im$\beta_u=0$ if and only if $(\Gamma_{\mathfrak p}({\rm im}\beta_u))_\mathfrak p=0$ for every $\mathfrak p\in{\rm Ass}\bar M$. This follows from the fact that im$\beta_u$ is a submodule of $F^{*^{u+1}}(\bar M)$ and therefore Ass(im$\beta_u)\subseteq{\rm Ass}F^{*^{u+1}}(\bar M)={\rm Ass}\bar M$ by \cite[Corollary 1.6]{HS93}.
\end{proof}

\begin{lemma} \label{associatedprimes}
For all but finitely many prime integers $p,$ the following hold.

(a) The associated primes of $\bar M$ are minimal primes of ideals $(p, \mathfrak p)$ as $\mathfrak p$ runs over the associated primes of $M$, and

(b) $\overline{\Gamma_{\mathfrak p}(M)}\stackrel{\rm def}{=}\Gamma_\mathfrak p(M)/p\Gamma_\mathfrak p(M)\cong \Gamma_{(p, \mathfrak p)}(\bar M)$ for every associated prime $\mathfrak p$ of $M$.
\end{lemma}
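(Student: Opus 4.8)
The plan is to set up a single generic-freeness step that simultaneously controls the associated primes of $M$ and of each torsion submodule $\Gamma_\mathfrak p(M)$, and then to run a standard base-change argument over the residue field $\bar{\mathbb Z}$. First I would fix a primary decomposition $0=\bigcap_{j} N_j$ of the zero submodule of $M$, with $N_j$ being $\mathfrak p_j$-primary and $\{\mathfrak p_1,\dots,\mathfrak p_m\}=\operatorname{Ass}M$; correspondingly $\Gamma_{\mathfrak p_j}(M)$ is, up to the usual identifications, the intersection of the $\mathfrak p_i$-primary components with $\mathfrak p_i\not\subseteq\mathfrak p_j$ (equivalently, it is computed from those $N_i$ whose primes do not contain $\mathfrak p_j$). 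Each of these is a finitely generated $R$-module, and each quotient $M/N_j$, $R/\mathfrak p_j$, etc., is finitely generated over $R=\mathbb Z[x_1,\dots,x_n]$. Applying the generic freeness lemma \cite[Lemma 8.1]{HR74} to this finite collection of modules, there is a single nonzero $\delta\in\mathbb Z$ such that, after inverting $\delta$, all of them become free $\mathbb Z_\delta$-modules, and all the quotients and inclusions among them are compatible with base change to $\bar{\mathbb Z}=\mathbb Z/p\mathbb Z$ for every prime $p\nmid\delta$.

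The second step is to read off part (a). For $p\nmid\delta$, flatness gives $\bar M=\bar{\mathbb Z}\otimes_{\mathbb Z}M$ with $\operatorname{Ass}_{\bar R}\bar M$ governed by the primary decomposition $\bar M\supseteq\bigcap_j\bar N_j$; after possibly enlarging $\delta$ so that each $R/\mathfrak p_j$ stays ``nice'' mod $p$ (i.e. $\bar R/(p,\mathfrak p_j)$ has the expected associated primes), the $\mathfrak p_j$-primary component $N_j$ reduces to a submodule whose associated primes are exactly the minimal primes over $(p,\mathfrak p_j)$. I would invoke here the standard fact that for a prime $\mathfrak p\subset R$ and all but finitely many $p$, $\operatorname{Ass}_{\bar R}\bigl(\bar R/(p,\mathfrak p)\bar R\bigr)$ equals the set of minimal primes of $(p,\mathfrak p)$ (this is again generic freeness applied to $R/\mathfrak p$, plus the fact that the generic fiber of $\operatorname{Spec}(R/\mathfrak p)\to\operatorname{Spec}\mathbb Z$ being geometrically reduced/irreducible is an open condition on $\operatorname{Spec}\mathbb Z$). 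Intersecting finitely many such primary submodules and taking associated primes commutes with the finite enlargement of $\delta$, so $\operatorname{Ass}_{\bar R}\bar M$ is contained in (in fact equals, after discarding embedded contributions that persist) the union of the minimal primes of the $(p,\mathfrak p_j)$.

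For part (b), the key point is that the torsion functor $\Gamma_{(p,\mathfrak p)}(-)$ on $\bar R$-modules is computed purely from the primary decomposition: $\Gamma_{(p,\mathfrak p)}(\bar M)$ is the sum of those primary components of $0\subseteq\bar M$ whose associated primes are contained in (some minimal prime of) $(p,\mathfrak p)$, equivalently it kills exactly the $\bar{\mathfrak q}$-primary parts with $\bar{\mathfrak q}\not\supseteq(p,\mathfrak p)$. Over $\mathbb Z[x_1,\dots,x_n]$, $\Gamma_\mathfrak p(M)$ is analogously the sum of the $\mathfrak q$-primary components of $0\subseteq M$ with $\mathfrak q\subseteq\mathfrak p$. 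By our choice of $\delta$ these two constructions are interchanged by $-\otimes_{\mathbb Z}\bar{\mathbb Z}$: a prime $\mathfrak q\subseteq\mathfrak p$ of $R$ reduces (for $p\nmid\delta$) to primes $\bar{\mathfrak q}$ all of which lie under $(p,\mathfrak p)$, while a prime $\mathfrak q\not\subseteq\mathfrak p$ reduces to primes not contained in $(p,\mathfrak p)$ (here is exactly where finitely many more primes $p$ must be excluded, to ensure that the strict containment $\mathfrak q\not\subseteq\mathfrak p$ is preserved after reduction — this is a containment of finitely generated ideals, hence governed by finitely many determinantal/resultant conditions on $p$). Granting this, $\overline{\Gamma_\mathfrak p(M)}=\Gamma_\mathfrak p(M)\otimes_{\mathbb Z}\bar{\mathbb Z}$ is the submodule of $\bar M$ cut out by precisely the primary components surviving into $\Gamma_{(p,\mathfrak p)}(\bar M)$, giving the desired isomorphism.

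I expect the main obstacle to be the bookkeeping in the last step: making sure that a \emph{single} $\delta$ suffices to control simultaneously (i) freeness of $M$, all $\Gamma_{\mathfrak p_j}(M)$ and all relevant subquotients, (ii) compatibility of the primary decomposition of $0\subseteq M$ with reduction mod $p$, (iii) the associated-prime behavior of each $\bar R/(p,\mathfrak p_j)$, and (iv) preservation of the non-containments $\mathfrak p_i\not\subseteq\mathfrak p_j$ after reduction. Each condition individually excludes only finitely many primes; the point is just that there are finitely many conditions, so their union is still finite. None of the individual verifications is deep — they are all instances of generic freeness or of "an open condition on the base $\operatorname{Spec}\mathbb Z$" — but organizing them into one clean statement is where the care is needed.
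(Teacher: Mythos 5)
Your overall genericity philosophy (finitely many conditions, each excluding only finitely many primes, plus generic freeness over $\mathbb Z$ to keep the relevant sequences exact after reduction) matches the paper's, and that part of the bookkeeping is fine. But the central difficulty of part (a) is not addressed: you assert that ``the $\mathfrak p_j$-primary component $N_j$ reduces to a submodule whose associated primes are exactly the minimal primes over $(p,\mathfrak p_j)$,'' and the only justification offered is the standard fact about $\operatorname{Ass}\bigl(\bar R/(p,\mathfrak p)\bar R\bigr)$ for the \emph{cyclic} module $R/\mathfrak p$. A $\mathfrak p_j$-primary quotient $M/N_j$ is not cyclic; it merely has a filtration with quotients $R/\mathfrak q$ for various $\mathfrak q\supseteq\mathfrak p_j$, and nothing in your argument rules out that $\overline{M/N_j}$ acquires embedded associated primes lying over those larger $\mathfrak q$. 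Ruling out precisely these embedded primes is the real content of (a), and the paper does it by a completely different mechanism: if $\mathfrak q\ni p$ is associated to $\bar M$ and $p$ avoids every associated prime of $M$, then $\operatorname{depth}M_{\mathfrak q}=1$, so by Auslander--Buchsbaum $\mathfrak q\in\operatorname{Supp}\operatorname{Ext}^{h-1}_R(M,R)$ with $h=\operatorname{ht}\mathfrak q$; a height analysis of the minimal primes of these finitely many Ext modules (after excluding the finitely many $p$ lying in a height-$h$ minimal prime of one of them) then forces $\mathfrak q$ to be minimal over $(p,\mathfrak p)$ for some $\mathfrak p\in\operatorname{Ass}M$. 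Without this step, or an equivalent input on the behavior of $\operatorname{Ass}$ in the fibres of a flat family, your reduction of (a) to the cyclic case does not close.

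For (b) the same gap propagates: you compute $\Gamma_{(p,\mathfrak p)}(\bar M)$ from a primary decomposition of $0$ in $\bar M$ that you have not actually produced (note also that the torsion submodule is the \emph{intersection} of the components whose primes do not contain the support ideal, not a sum, and your first formulation of $\Gamma_{\mathfrak p_j}(M)$ has the containment backwards, though your parenthetical corrects it). The paper avoids controlling primary decompositions under reduction altogether: it reduces $0\to\Gamma_{\mathfrak p}(M)\to M\to M/\Gamma_{\mathfrak p}(M)\to 0$ mod $p$ by generic freeness, observes the easy containment $\overline{\Gamma_{\mathfrak p}(M)}\subseteq\Gamma_{(p,\mathfrak p)}(\bar M)$, and then applies part (a) to $M/\Gamma_{\mathfrak p}(M)$ together with a height count --- there are only finitely many height $h+1$ primes containing both $\mathfrak p$ and some height-$h$ associated prime $\mathfrak p_i\neq\mathfrak p$ of $M/\Gamma_{\mathfrak p}(M)$, so for almost all $p$ no minimal prime of $(p,\mathfrak p)$ is associated to $\overline{M/\Gamma_{\mathfrak p}(M)}$. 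You should restructure (b) along these lines, since it needs only (a) plus generic freeness.
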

\begin{proof}
(a) The set of the associated primes of $M$ is finite and each associated prime of $M$ contains at most one prime integer $p$. Hence all but finitely many prime integers $p$ do not belong to any associated prime of $M$. Fix one such prime integer $p$. 

Let $\q$ be a prime ideal of $R$ containing the integer $p$ and associated to $\bar M$. This is the case if and only if $\bar M_{\q}\ne 0$ and depth$\bar M_{\mathfrak q}=0$. Since $p\in \mathfrak q$ does not belong to any associated prime of $M$, the prime ideal $\mathfrak q$ is not associated to $M$, i.e., depth$M_{\mathfrak q}>0$. Since $\bar M_{\mathfrak q}=M_{\mathfrak q}/pM_{\mathfrak q}$, we conclude that $\{p\}$ is a maximal $M_{\mathfrak q}$-regular sequence of length 1, i.e., depth$M_{\mathfrak q}=1$. 

Let $h={\rm dim}R_\mathfrak q={\rm height}\mathfrak q,$ then the Auslander-Buchsbaum theorem (\cite[Theorem 19.1]{hM89}) implies that the projective dimension of $M_\mathfrak q$ in the category of $R_\mathfrak q$-modules is $h-{\rm depth}M_{\mathfrak q}=h-1$.  This in turn implies that Ext$^{h-1}_{R_{\mathfrak q}}( M_{\mathfrak q}, R_{\mathfrak q})\ne 0$. Since Ext$^{h-1}_{R_{\mathfrak q}}( M_{\mathfrak q}, R_{\mathfrak q})={\rm Ext}^{h-1}_R( M, R)_\mathfrak q$, we conclude that the prime ideal $\mathfrak q$ is in the support of Ext$^{h-1}_R( M, R)$.

If $\Q$ is a prime ideal of height $<h-1,$ then $R_\mathfrak Q$ is regular and of dimension $<h-1,$ hence $\text{Ext}^{h-1}_R( M, R)_\Q=\text{Ext}^{h-1}_{R_\Q}(M_\Q,R_\Q)=0$. Therefore every minimal prime of the $R$-module Ext$^{h-1}_R( M, R)$ has height at least $h-1$. 

The height $h-1$ minimal primes of Ext$^{h-1}_R( M, R)$ are precisely the associated primes of $M$ of height $h-1$. Indeed, Ext$^{h-1}_R( M, R)_\mathfrak p={\rm Ext}^{h-1}_{R_\mathfrak p}(M_\mathfrak p,R_\mathfrak p)\ne 0$ for a height $h-1$ prime ideal $\mathfrak p$ is equivalent by the Auslander-Buchsbaum theorem (\cite[Theorem 19.1]{hM89}) to depth$M_\mathfrak p=0$, i.e., $\mathfrak p$ being associated to $M$. 

If $\mathfrak q$ contains a minimal prime $\mathfrak p$ of Ext$^{h-1}_R( M, R)$ of height $h-1$, then $\mathfrak q$, being of height $h$ and containing $p\not\in\mathfrak p$, is a minimal prime over the ideal $(p,\mathfrak p)$. 

If $\mathfrak q$ does not contain a minimal prime of Ext$^{h-1}_R( M, R)$ of height $h-1$, then $\mathfrak q$, being of height $h$ and in the support of Ext$^{h-1}_R( M, R)$, is itself a minimal prime of Ext$^{h-1}_R( M, R)$ because every minimal prime of Ext$^{h-1}_R( M, R)$ has height at least $h-1$. 

Thus if a prime integer $p$ does not belong to any associated prime of $M$ and does not belong to any associated prime of Ext$^{h-1}_R( M, R)$ of height $h$, as $h$ runs over all integers $\leq {\rm dim}R$, then every associated prime of $\bar M$ is a minimal prime over the ideal $(p,\mathfrak p)$ where $\mathfrak p$ is an associated prime of $M$. Since the set of the associated primes of $M$ and the set of the associated primes of Ext$^{h-1}_R( M, R)$ of height $h$ are finite, all but finitely many prime integers $p$ have this property. This proves (a).

(b) The modules in the short exact sequence $0\to \Gamma_\mathfrak p(M)\to M\to M/\Gamma_\mathfrak p(M)\to 0$ are finitely generated over $R$ and $R$ is a finitely generated $\mathbb Z$-algebra. Hence by the generic freeness lemma (\cite[Lemma 8.1]{HR74}) there is an integer $\gamma\in \mathbb Z$ such that $\Gamma_\mathfrak p(M)_\gamma, M_\gamma$ and $(M/\Gamma_\mathfrak p(M))_\gamma$ are free $\mathbb Z_\gamma$-modules. Since the induced sequence of free $\mathbb Z_\gamma$-modules $0\to \Gamma_\mathfrak p(M)_\gamma\to M_\gamma\to (M/\Gamma_\mathfrak p(M))_\gamma\to 0$ is exact, tensoring over $\mathbb Z$ with $\mathbb Z/p\mathbb Z$ for a prime integer $p$ which does not divide $\gamma$ produces an exact sequence  $$0\to \overline{\Gamma_{\mathfrak p}(M)}\to \bar M\to \overline{M/\Gamma_\mathfrak p(M)}\to 0$$ where $\overline{\Gamma_{\mathfrak p}(M)}=\Gamma_\mathfrak p(M)/p\Gamma_\mathfrak p(M)$ and $\overline{M/\Gamma_\mathfrak p(M)}=(M/\Gamma_\mathfrak p(M))/p(M/\Gamma_\mathfrak p(M))$.

Viewing $\overline{\Gamma_{\mathfrak p}(M)}$ as a submodule of $\bar M$ and considering that every element of $\overline{\Gamma_{\mathfrak p}(M)}$ is annihilated both by $p$ and by some power of the ideal $\mathfrak p,$ we conclude that $\overline{\Gamma_{\mathfrak p}(M)}\subseteq \Gamma_{(p, \mathfrak p)}(\bar M)$. To prove (b) that this containment is actually an equality for all but finitely many $p,$ it is enough to show that $\Gamma_{(p,\mathfrak p)}(\bar M/\overline{\Gamma_{\mathfrak p}(M)})=0,$ i.e., $\Gamma_{(p,\mathfrak p)}(\overline{M/\Gamma_\mathfrak p(M)})=0$ for all but finitely many $p$ (since $\bar M/\overline{\Gamma_{\mathfrak p}(M)}\cong\overline{M/\Gamma_\mathfrak p(M)}$). And to prove this vanishing, it is enough to show that for all but finitely many $p,$ none of the minimal primes of the ideal $(p,\mathfrak p)$ are associated to $\overline{M/\Gamma_\mathfrak p(M)}$.

Let $h$ be the height of $\mathfrak p$ and let $\mathfrak p_1,\dots,\mathfrak p_s$ be the associated primes of $M/\Gamma_\mathfrak p(M)$ of height $h$. Since the heights of $\mathfrak p$ and $\mathfrak p_i$ are the same and $\mathfrak p$ is not associated to $M/\Gamma_\mathfrak p(M)$, i.e., $\mathfrak p\not =\mathfrak p_i$ for every $i$, the ideals $\mathfrak p+\mathfrak p_i$ are bigger than $\mathfrak p$ for every $i$. Hence the height of every prime ideal containing $\mathfrak p+\mathfrak p_i$ is at least $h+1$. This implies that there are only finitely many prime ideals $\mathfrak Q_1,\dots,\mathfrak Q_v$ of $R$ of height $h+1$ that contain both $\mathfrak p$ and $\mathfrak p_i$ for some $i$. 

Since only finitely many prime integers $p$ are contained in one of these prime ideals $\mathfrak Q_1,\dots,\mathfrak Q_v,$ and since the height of every minimal prime over the ideal $(p,\mathfrak p)$ is $h+1$, we conclude that for all but finitely many prime integers $p,$ no minimal prime over the ideal $(p,\mathfrak p)$ coincides with one of the $\mathfrak Q_1,\dots,\mathfrak Q_v$. That is for all but finitely many prime integers $p,$ no minimal prime over the ideal $(p,\mathfrak p)$ contains one of the ideals $\mathfrak p_1,\dots,\mathfrak p_s$.

But it follows from (a) that for all but finitely many prime integers $p,$ every associated prime of $\overline{M/\Gamma_\mathfrak p(M)}$ of height $h+1$ contains an associated prime of $M/\Gamma_\mathfrak p(M)$ of height $h$, i.e., it contains one of the ideals $\mathfrak p_1,\dots,\mathfrak p_s$. This finally shows that for all but finitely many prime integers $p,$ no minimal prime over the ideal $(p,\mathfrak p)$ is associated to $\overline{M/\Gamma_\mathfrak p(M)}$ and completes the proof of (b).
\end{proof} 

\begin{corollary}\label{maximum}
The maximum of $u(\bar M=M/pM)$, as $p$ runs through all the prime integers, is finite, where $u$ is defined in Definition \ref{D:u}.
\end{corollary}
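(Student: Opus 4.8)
The plan is to produce a single integer $C$, depending only on $M$ and not on $p$, such that $u(\bar M)\le C$ for all but finitely many prime integers $p$; for each of the remaining finitely many primes the number $u(\bar M)$ is automatically finite, because $\bar M$ is finitely generated over the Noetherian ring $\bar R$, so $\textnormal{Ass}\,\bar M$ is a finite set and, for $\q\in\textnormal{Ass}\,\bar M$, the module $\Gamma_\q(\bar M_\q)$ is finitely generated over $R_\q$ and killed by a power of $\q R_\q$, hence of finite length. The maximum asserted to be finite is then just the larger of $C$ and those finitely many values, so it suffices to exhibit $C$.

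To produce $C$ I would first reduce to a statement about the domains $R/\p$, $\p\in\textnormal{Ass}\,M$. Fix a prime $p$ to which Lemma \ref{associatedprimes} applies and let $\q\in\textnormal{Ass}\,\bar M$. By Lemma \ref{associatedprimes}(a) there is $\p\in\textnormal{Ass}\,M$ with $\q$ a minimal prime of $(p,\p)$; then $\q R_\q$ and $(p,\p)R_\q$ have the same radical, so the torsion functors $\Gamma_\q$ and $\Gamma_{(p,\p)}$ agree after localizing at $\q$, and together with Lemma \ref{associatedprimes}(b) this gives $\Gamma_\q(\bar M)_\q=\Gamma_{(p,\p)}(\bar M)_\q=\bigl(\overline{\Gamma_\p(M)}\bigr)_\q$. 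Hence it is enough to bound ${\rm length}_{R_\q}(G_\q/pG_\q)$, where $G:=\Gamma_\p(M)$, uniformly over the finitely many $\p\in\textnormal{Ass}\,M$, over $p$, and over the primes $\q$ minimal over $(p,\p)$. Since $\textnormal{Supp}\,G\subseteq V(\p)$, a prime filtration of $G$ has quotients $R/\p_i$ with $\p_i\supseteq\p$, and the number of indices with $\p_i=\p$ is the finite integer $\ell_\p:={\rm length}_{R_\p}(G_\p)$, which depends only on $M$ and $\p$. Reducing this filtration modulo $p$ exhibits $G/pG$ as filtered with successive quotients that are quotients of $R/(\p_i,p)$, so ${\rm length}_{R_\q}(G_\q/pG_\q)\le\sum_i{\rm length}_{R_\q}\bigl(R/(\p_i,p)\bigr)_\q$. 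For all but finitely many $p$, every prime minimal over $(p,\p)$ has height ${\rm height}(\p)+1$, whereas (since $R$ is catenary and $p\notin\p_i$) any prime containing $(\p_i,p)$ with $\p_i\supsetneq\p$ has height at least ${\rm height}(\p)+2$; for such $p$ only the terms with $\p_i=\p$ survive, and therefore
$${\rm length}_{R_\q}(G_\q/pG_\q)\ \le\ \ell_\p\cdot{\rm length}_{R_\q}\bigl(R_\q/(\p,p)R_\q\bigr)\ =\ \ell_\p\cdot{\rm length}_{S_{\bar\q}}\bigl(S_{\bar\q}/pS_{\bar\q}\bigr),$$
where $S:=R/\p$ is a domain finitely generated over $\mathbb Z$ and $\bar\q$ denotes the corresponding prime of $S$, which is a minimal prime of $pS$.

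Everything is thus reduced to the following, which I expect to be the main obstacle: for a fixed finitely generated $\mathbb Z$-domain $S$, the length ${\rm length}_{S_{\bar\q}}(S_{\bar\q}/pS_{\bar\q})$ is bounded as $p$ ranges over the primes and $\bar\q$ over the minimal primes of $pS$. This is trivial when $S$ has positive characteristic (only one prime $p$ is relevant and the length is $1$), so assume $\mathbb Z\hookrightarrow S$. I would use Noether normalization over $\mathbb Z$: after inverting some $0\ne c\in\mathbb Z$ there is a finite injection $A:=\mathbb Z_c[t_1,\dots,t_d]\hookrightarrow S_c$ with $d=\dim S-1$, and by generic freeness $S_c$ is $A$-free of rank $e:=[\textnormal{Frac}(S):\textnormal{Frac}(A)]$ away from the zero set of some nonzero $g\in A$. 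Clearing the $\mathbb Z$-denominators of the coefficients of $g$, one finds that for all but finitely many $p$ the fiber algebra $(S/pS)\otimes_{A/pA}\textnormal{Frac}(A/pA)$ is free of rank $e$ over $\textnormal{Frac}(A/pA)=\mathbb F_p(t_1,\dots,t_d)$. On the other hand, any minimal prime $\bar\q$ of $pS$ has height one in the domain $S$, so $\dim S/\bar\q=\dim S-1=d$ by the dimension formula for finitely generated $\mathbb Z$-domains; since $S/pS$ is finite over $A/pA$ this forces $\bar\q$ to contract to the zero ideal of $A/pA$, whence $S_{\bar\q}/pS_{\bar\q}$ is a localization—and hence a direct factor—of the $e$-dimensional $\mathbb F_p(t_1,\dots,t_d)$-algebra above, so its length is at most $e$. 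The finitely many exceptional primes contribute only finitely many finite lengths. Combining this with the previous step, for all but finitely many $p$ we obtain $u(\bar M)\le\max_{\p\in\textnormal{Ass}\,M}\ell_\p\,e_\p$, a constant depending only on $M$, where $e_\p$ is the degree attached to $R/\p$ above; together with the first paragraph this proves the corollary. The delicate point throughout is the uniform bound on ${\rm length}_{S_{\bar\q}}(S_{\bar\q}/pS_{\bar\q})$, i.e.\ controlling how badly the fibers of $\textnormal{Spec}(R/\p)\to\textnormal{Spec}(\mathbb Z)$ can degenerate; the rest is bookkeeping with associated primes, heights, and filtrations.
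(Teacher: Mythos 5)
Your argument is correct, and it follows the same overall strategy as the paper --- reduce to the associated primes of $M$ via Lemma \ref{associatedprimes}(a),(b), then extract a $p$-independent constant from the generic behaviour of $R/\p$ over a Noether normalization defined over (a localization of) $\mathbb Z$ --- but the key uniform bound is organized differently. The paper regards $G=\Gamma_\p(M)$ itself as a finitely generated module over the normalization $S_\delta=\mathbb Z_\delta[y_1,\dots,y_{n-h}]$, uses the absence of $p$-torsion in $M$ to conclude that $G_{(p)}$ is free over the discrete valuation ring $(S_\delta)_{pS_\delta}$ of rank $\rho=\dim_K(K\otimes_{S_\delta}G)$, and bounds the length at $\q$ by $\rho$ in one stroke. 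You instead factor the bound into a module-theoretic piece and a ring-theoretic piece: a prime filtration of $G$ reduces everything to the $\ell_\p$ subquotients isomorphic to $R/\p$ (the subquotients $R/(\p_i,p)$ with $\p_i\supsetneq\p$ dying at $\q$ for almost all $p$ by the height count you give), and the remaining, genuinely arithmetic problem --- uniformly bounding the lengths of the local rings of the closed fibres of $\mathrm{Spec}(R/\p)\to\mathrm{Spec}\,\mathbb Z$ --- is settled by generic freeness of $R/\p$ over its normalization, yielding $\ell_\p e_\p$. Nothing is lost: for matching choices of normalization one has $\rho=\ell_\p e_\p$, as one sees by localizing your prime filtration at the generic point of the base (the terms with $\p_i\supsetneq\p$ vanish there as well). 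Your version is more modular and makes explicit where the possible degeneration of the fibres over $\mathrm{Spec}\,\mathbb Z$ enters; the paper's DVR argument is shorter because it applies generic freeness directly to the torsion module rather than to the ring.
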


\begin{proof}
If all associated primes of $M$ contain prime integers, then for all prime integers $p,$ except those finitely many contained in associated primes of $M,$ the module $\bar M=M/pM$ is zero. Hence $u(\bar M)=0$ for all but finitely many $p.$

Otherwise, let $\p$ be an associated prime of $M$ that does not contain any prime integer. Let $h$ be the height of $\mathfrak p$. Let $y_1,\dots,y_{n-h}\in \tilde R=R\otimes_{\mathbb Z}\mathbb Q=\mathbb Q[x_1\dots,x_n]$ be linear combinations of variables $x_1,\dots,x_n$ with coefficients in $\mathbb Q$ such that $\tilde R/\mathfrak p\tilde R$ is finite over the ring $S=\mathbb Q[y_1,\dots,y_{n-h}]$. Since the field $\mathbb Q$ is infinite, generic linear combinations will do.

For each $i,$ let $\bar x_i\in \tilde R/\mathfrak p\tilde R$ be the image of $x_i$ under the natural map $\tilde R\to \tilde R/\mathfrak p\tilde R$ and let $\bar x_i^{t_i}+s_{i,1}\bar x_i^{t_i-1}+s_{i,2}\bar x_i^{t_i-2}+\dots=0$, where $s_{i,j}\in S$, be an equation expressing integral dependence of $\bar x_i$ on $S$. The polynomials $s_{i,j}\in S$ have a finite number of coefficients in $\mathbb Q$ and $y_1,\dots,y_{n-h}$, as linear combinations of $x_1,\dots,x_n$, also have a finite number of coefficients in $\mathbb Q$. All these coefficients have a common denominator $\delta\in \mathbb Z$. Hence $y_1,\dots,y_{n-h}\in R_{\delta}=\mathbb Z_{\delta}[x_1,\dots,x_n]$ and $R_{\delta}/\p R_{\delta}$ is a finite $S_{\delta}$-module where $S_{\delta}=\mathbb Z_{\delta}[y_1,\dots, y_{n-h}]$.

Since $S_\delta$ is a subring of $R_\delta$, the module $M_\delta$ has a natural structure of $S_\delta$-module, hence so does $\Gamma_{\mathfrak p}(M_\delta)$. This is a finitely generated $R_\delta$-submodule of $M_\delta$ supported at $\mathfrak p$ and therefore annihilated by some power of $\mathfrak p$. Hence $\Gamma_{\mathfrak p}(M_\delta)$ has a finite filtration with quotients finitely generated $R_\delta/{\mathfrak p}R_\delta$-modules.   Since $R_\delta/{\mathfrak p}R_\delta$ is a finitely generated $S_\delta$-module, $\Gamma_{\mathfrak p}(M)_\delta$ is a finitely generated $S_\delta$-module.

Let $p$ be a prime integer that does not divide $\delta$, does not belong to any associated prime of $M$ and does not belong to any height $h$ minimal prime of the $R$-module Ext$^{h-1}_R( M, R)$ as $h$ runs through all integers $\leq {\rm dim}R$. This includes all but finitely many prime integers $p$. 

Since $p$ does not belong to any associated prime of $M$, the module $M$ has zero $p$-torsion. Let $S_{(p)}$ be the ring $S$ localized at the principal prime ideal $pS$. Since the ring $S_{(p)}$ is a discrete valuation ring, since the module $\Gamma_\p(M)$, being a submodule of $M$, has zero $p$-torsion and since $\Gamma_\p(M)_\delta$ is a finitely generated $S_\delta$-module, we conclude that $\Gamma_\p(M)_{(p)}\stackrel{\rm def}{=}S_{(p)}\otimes_S\Gamma_\p(M)$ is a free $S_{(p)}$-module of finite rank $\rho={\rm dim}_K(K\otimes_S\Gamma_\p(M))$ where $K$ is the fraction field of $S$. Hence the dimension of $\overline{\Gamma_{\frak p}(M)_{(p)}}\stackrel{\rm def}=\Gamma_\p(M)_{(p)}/p\Gamma_\p(M)_{(p)}$ over the residue field $\kappa$ of $S_{(p)}$ also equals $\rho$. This implies that for every minimal prime $\mathfrak q$ over the ideal $(p,\mathfrak p)$ the length of  $\overline{\Gamma_{\mathfrak p}(M)}_{\mathfrak q}$ in the category of $R_{\mathfrak q}$-modules is at most $\rho$. Clearly the integer $\rho$ is independent of the prime integer $p$. 

It follows from Lemma \ref{associatedprimes}(b) that $\overline{\Gamma_{\frak p}(M)_{(p)}}\cong S_{(p)}\otimes_S\Gamma_{(p,\mathfrak p)}(\bar M)$ for all but finitely many prime integers $p$. Hence for every minimal prime $\mathfrak q$ over the ideal $(p,\mathfrak p),$ the length of $\Gamma_{\mathfrak q}(\bar M)_{\mathfrak q}$ in the category of $\bar R_{\mathfrak q}$-modules is at most $\rho$, which is independent of $p$. But according to Lemma \ref{associatedprimes}(a), for all but finitely many $p,$ every associated prime of $\bar M$ is minimal over $(p,\mathfrak p)$ for some associated prime $\mathfrak p$ of $M$.
\end{proof}

\begin{corollary}\label{upperboundonr}
Let $u=u(M)$ be the maximum of $u(\bar M=M/pM)$, as $p$ runs through all the prime integers. Let $p$ be any prime integer, let $\beta:\bar M\to F(\bar M)$ be an $\bar R$-module homomorphism and let $\mathcal M$ be the $F$-finite module generated by $\beta$. 

(a) The first integer $r$ such that ker$\beta_r=$ker$\beta_{r-1}$ satisfies the inequality $r\leq u$.

(b) $\mathcal M=0$ if and only if $\beta_u=0$.
\end{corollary}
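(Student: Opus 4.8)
The statement follows by combining Corollary~\ref{C:VanishingEquiv} with Corollary~\ref{maximum}; essentially no new argument is needed beyond unwinding the definition of $u(M)$. Corollary~\ref{maximum} guarantees that $u=u(M)$ is a well-defined finite non-negative integer, and by its very definition as a maximum we have $u(\bar M)\le u$ for the prime integer $p$ under consideration, where $\bar M=M/pM$. Note also that $\bar M$ is a finitely generated $\bar R$-module (being a quotient of the finitely generated $R$-module $M$) and that $\beta$, generating $\mathcal M$, is a generating morphism in the sense required by Corollary~\ref{C:VanishingEquiv}, so that corollary applies verbatim.

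For part~(a), I would apply Corollary~\ref{C:VanishingEquiv} to $\bar M$ and the generating morphism $\beta$: it asserts that the first integer $r$ with $\ker\beta_r=\ker\beta_{r-1}$ satisfies $r\le u(\bar M)$. Chaining this with $u(\bar M)\le u$ gives $r\le u$. (If $\bar M=0$ then $\mathcal M=0$ and $\beta_u=0$ trivially, so that degenerate case may be set aside at the outset.) For part~(b), since by part~(a) the ascending chain $\ker\beta_1\subseteq\ker\beta_2\subseteq\cdots$ of submodules of $\bar M$ has already stabilized by step $r\le u$, its value at index $u$ coincides with the common stable value $C$ of Proposition~\ref{PA}(a), i.e.\ $\ker\beta_u=C$; hence by Proposition~\ref{PA}(c) we have $\mathcal M=0$ if and only if $\bar M=C=\ker\beta_u$, i.e.\ if and only if $\beta_u$ is the zero map. (Equivalently one may quote the ``in particular'' clause of Corollary~\ref{C:VanishingEquiv}, which gives $\mathcal M=0\iff\beta_{u(\bar M)}=0$, and then use $\beta_u=F^{*^{u(\bar M)}}\!\bigl(\beta_{u-u(\bar M)}\bigr)\circ\beta_{u(\bar M)}$ to conclude $\beta_{u(\bar M)}=0\Rightarrow\beta_u=0$, while $\beta_u=0\Rightarrow\ker\beta_u=\bar M\Rightarrow C=\bar M\Rightarrow\mathcal M=0$.)

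Since all of the substantive content has already been established in Corollaries~\ref{C:VanishingEquiv} and~\ref{maximum}, there is no genuine obstacle here: the statement merely repackages those results into a single bound $u$ that works uniformly for \emph{every} prime integer $p$. The one point deserving a moment's care is the bookkeeping in part~(b)---passing from ``the kernel chain stabilizes at some index $\le u$'' to ``$\ker\beta_u$ is the stable value''---which uses nothing beyond the monotonicity of the chain $\{\ker\beta_i\}_{i\ge 1}$.
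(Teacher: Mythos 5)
Your proposal is correct and follows exactly the route the paper takes: the paper's own proof simply cites Corollaries~\ref{C:VanishingEquiv} and~\ref{maximum}, and your write-up merely spells out the unwinding (namely $u(\bar M)\le u$ and the monotonicity of the kernel chain) that the paper leaves implicit. No discrepancy.
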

\begin{proof} 
This is a consequence of Corollaries \ref{C:VanishingEquiv} and \ref{maximum}. 
\end{proof}

This corollary establishes an upper bound on the number of steps involved in the algorithm (i.e. on the first integer $r$ such that ker$\beta_r$=ker$\beta_{r-1}$). This upper bound depends only on the $R$-module $M$ and is independent of the prime integer $p$ and even of the $\bar R$-module map $\beta:\bar M\to F^*(\bar M)$.

The integer $u=u(M)$ plays an important role in our modification of the algorithm  from
\cite[Remark 2.4]{gL97}. Given the module $M$ (say through generators and relations), it follows from the proofs of Lemma \ref{associatedprimes} and Corollary \ref{maximum} that the integer $u=u(M)$ is algorithmically computable; we are leaving the details to the interested reader.

\section{The Algorithm} \label{S:theAlgorithm}
In this section we complete the description of our modification of the algorithm from \cite[Remark 2.4]{gL97} for deciding the vanishing of local cohomology modules $H^i_{\bar I}(\bar R)$.
Recall that $R=\mathbb Z[x_1,\dots,x_n]$ and $\bar R=R/pR=(\mathbb Z/p\mathbb Z)[x_1,\dots,x_n]$ where $p$ is a prime integer. Let $f_,\dots,f_s\in R$ be polynomials and see Section 1 for the definition of the Koszul cocomplex $K^\bullet(R;f_1,\dots,f_s)$. In this section, $M$ denotes the $i$-th cohomology module of $K^\bullet(R;f_1,\dots,f_s)$. Clearly $M$ is a finitely generated $R$-module. We assume that the prime integer $p$ has the property that the $i$-th cohomology module of the Koszul cocomplex  $K^\bullet(\bar R;\bar f_1,\dots,\bar f_s)$ is $\bar M=M/pM$ where $\bar f_t\in \bar R$ is the polynomial obtained from $f_t$ by reducing its coefficients modulo $p$. According to Proposition \ref{modp}, all but finitely many prime integers $p$ have this property. Let $I=(f_1,\dots,f_s)\subset R$ (resp. $\bar I=(\bar f_1,\dots,\bar f_s)\subset \bar R$) be the ideal generated by $f_1,\dots, f_s$ (resp. $\bar f_1,\dots, \bar f_s$). 

As is pointed out in Section 1, the $i$-th cohomology module of $K^\bullet(\bar R;\bar f_1^{p},\dots,\bar f_s^{p})$ is $F^{*}(\bar M)$ and $H^i_{\bar I}(\bar R)$ is the $F$-finite module generated by the map $\beta:\bar M\to F^{*}(\bar M)$ which is the map induced on cohomology by the chain map $$\beta^\bullet:K^\bullet(\bar R;\bar f_1,\dots,\bar f_s){\to} F^*(K^\bullet(\bar R;\bar f_1,\dots,\bar f_s))\cong K^\bullet(\bar R;\bar f_1^{p},\dots,\bar f_s^{p})$$ which sends $\bar R_{v_1\dots,v_i}\subseteq K^i(\bar R;\bar f_1,\dots,\bar f_s)$ to $\bar R_{v_1\dots,v_i}\subseteq K^i(\bar R;\bar f_1^p,\dots,\bar f_s^p)$ via multiplication by $(\bar f_{v_1}\cdots \bar f_{v_i})^{p-1}$. Similarly, for every $j,$ the $i$-th cohomology module of $K^\bullet(\bar R;\bar f_1^{p^j},\dots,\bar f_s^{p^j})$ is $F^{*^j}(\bar M)$ and the map $\beta_j:\bar M\to F^{*^j}(\bar M)$ of Proposition \ref{PA} is the map induced on cohomology by the chain map $$\beta_j^\bullet:K^\bullet(\bar R;\bar f_1,\dots,\bar f_s){\to} F^{*^j}(K^\bullet(\bar R;\bar f_1,\dots,\bar f_s))\cong K^\bullet(\bar R;\bar f_1^{p^j},\dots,\bar f_s^{p^j})$$ which sends $\bar R_{v_1\dots,v_i}\subseteq K^i(\bar R;\bar f_1,\dots,\bar f_s)$ to $\bar R_{v_1\dots,v_i}\subseteq K^i(\bar R;\bar f_1^{p^j},\dots,\bar f_s^{p^j})$ via multiplication by $(\bar f_{j_1}\cdots \bar f_{j_i})^{p^j-1}$. This is because $\beta_j^\bullet=F^{*^{j-1}}(\beta^\bullet)\circ\cdots\circ F^{*}(\beta^\bullet)\circ{\beta^\bullet}$, where every $F^{*^{t}}(\beta^\bullet)$ sends $\bar R_{v_1\dots,v_i}\subseteq K^i(\bar R;\bar f_1,\dots,\bar f_s)$ to $\bar R_{v_1\dots,v_i}\subseteq K^i(\bar R;\bar f_1^{p^t},\dots,\bar f_s^{p^t})$ via multiplication by $(\bar f_{j_1}\cdots \bar f_{j_i})^{(p-1)p^t}$ and equality $(p-1)+(p-1)p+(p-1)p^2+\cdots+(p-1)p^{j-1}=p^j-1$ holds.

The main result of this section is an algorithm to decide for a fixed $j$ whether \hbox{$\beta_j:\bar M\to F^{*j}(\bar M)$} is the zero map, the point being that this algorithm avoids deciding membership in an ideal generated by polynomials whose degrees rapidly grow with the growth of $p$. As a result, the memory consumed by this algorithm grows slowly with the growth of $p$ (more precisely, it grows linearly rather than exponentially). This algorithm plays a crucial role in our modification of the algorithm from \cite[Remark 2.4]{gL97}.

Denote the multi-index
$(i_1,\cdots,i_n)$
by $\bar{i}.$
Let $F^\ell:\bar R_s\to \bar R_t$ be the $\ell$-fold Frobenius homomorphism where, as in Section 1, $R_s$ and $R_t$ are copies of $R$. Since $\bar {\mathbb Z}$ is perfect, $\bar R_t$
is a free $\bar R_s$-module
on the $p^{\ell n}$ monomials $e_{\bar{i}}=x^{i_1}_1\cdots x^{i_n}_n$
where \hbox{$0\leqslant i_j<p^\ell$} for every $j.$ Suppose $N'$
is an $\bar R_s$-module. Then the pull-back $F^{*^\ell}(N')=\bar R_t\otimes_{\bar R_s}
N'=\displaystyle\bigoplus_{\overline{i}} e_{\overline{i}}\otimes_{\bar R_s} N'$ is an
$\bar R_t$-module, where $e_{\overline{i}}\otimes_{\bar R_s} N'(\cong N')$ will be called
the
$e_{\overline{i}}\:$-component of $F^*(N').$ Suppose $N''$ is an $\bar R_t$-module. For
each \hbox{$f\in \text{Hom}_{\bar R_t}(N'',F^{*^\ell}(N')),$}
define
$f_{\bar{i}}=p_{\bar{i}}\circ f:F_*^\ell(N'')\rightarrow N',$ where
$$p_{\bar{i}}: F^{*^\ell}(N')(=\bigoplus_{\bar{i}}(e_{\bar{i}}\otimes_{\bar R_s}
N'))\xrightarrow{
y\mapsto
e_{\bar{i}}\otimes p_{\bar{i}}(y)}
e_{\bar{i}}\otimes_{\bar R_s}N' (\cong N')$$ is the natural projection onto the
$e_{\overline{i}}\:$-component. We will need the following result from \cite{gL09}.

\begin{theorem}\label{TD} (Theorem 3.3 in~\cite{gL09}) We denote the multi-index $(p^\ell-1,\cdots,p^\ell-1)$ by $\overline{p^\ell-1}$. For every $\bar R_t$-module $N''$ and every $\bar R_s$-module $N',$
there is an
$\bar R_t$-linear isomorphism
\begin{align*}
\text{Hom}_{\bar R_s}(F^\ell_*(N''),N') &\cong \text{Hom}_{\bar R_t}(N'',
F^{*^\ell}(N'))\\
g_{\overline{p^\ell-1}}(-)&\leftarrow
(g=\oplus_{\bar{i}}(e_{\bar{i}}\otimes_{\bar R_s}g_{\bar{i}}(-)))\\
h&\mapsto\oplus_{\bar{i}}(e_{\bar{i}}\otimes_{\bar R_s}h(e_{\overline{p^\ell-1}-\bar{i}}
(-))).
\end{align*}
\end{theorem}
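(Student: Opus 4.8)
The plan is to verify directly that the two assignments displayed in the statement are mutually inverse $\bar R_t$-linear homomorphisms. Write $\Phi$ for $g\mapsto g_{\overline{p^\ell-1}}$ and $\Psi$ for $h\mapsto\bigoplus_{\bar i}\bigl(e_{\bar i}\otimes_{\bar R_s}h(e_{\overline{p^\ell-1}-\bar i}(-))\bigr)$. The structural content is that $\bar R_t$, which by perfectness of $\bar{\mathbb Z}$ is a free $\bar R_s$-module of rank $p^{\ell n}$ on the monomials $e_{\bar i}$ with $0\le i_k<p^\ell$, is a Frobenius $\bar R_s$-algebra: the $\bar R_s$-linear functional $\sigma=p_{\overline{p^\ell-1}}\colon\bar R_t\to\bar R_s$ extracting the coefficient of the top monomial $e_{\overline{p^\ell-1}}$ makes the bilinear form $(a,b)\mapsto\sigma(ab)$ perfect, with $\{e_{\bar i}\}$ and $\{e_{\overline{p^\ell-1}-\bar i}\}$ dual bases; the asserted isomorphism is then the resulting ``\emph{induction equals coinduction}'' statement. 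One can organize the proof either as the direct verification sketched below or, equivalently, as the composite of the restriction--coinduction adjunction $\mathrm{Hom}_{\bar R_s}(F^\ell_*(N''),N')\cong\mathrm{Hom}_{\bar R_t}(N'',\mathrm{Hom}_{\bar R_s}(\bar R_t,N'))$ with the $\bar R_t$-linear isomorphism $\mathrm{Hom}_{\bar R_s}(\bar R_t,N')\cong\bar R_t\otimes_{\bar R_s}N'=F^{*^\ell}(N')$ sending $b\otimes n'$ to the functional $a\mapsto\sigma(ab)n'$; unwinding either route reproduces the displayed formulas.

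The one genuinely computational ingredient is the multiplication table of the basis $\{e_{\bar i}\}$ modulo the relations $x_k^{p^\ell}=F^\ell(x_k)\in\bar R_s$. First I would record that a scalar $r\in\bar R_s$ acts on $F^{*^\ell}(N')$ (and on $F^\ell_*(N'')$) through $F^\ell(r)=r^{p^\ell}$, and that multiplication by such a $p^\ell$-th power is ``diagonal'' on $\bigoplus_{\bar i}e_{\bar i}\otimes N'$ (no carrying occurs); concretely, for a variable $x_k$ and a multi-index $\bar i$ in range, $x_k\cdot(e_{\bar i}\otimes n')=e_{\bar i+\bar e_k}\otimes n'$ if $i_k<p^\ell-1$, while $x_k\cdot(e_{\bar i}\otimes n')=e_{\bar i-(p^\ell-1)\bar e_k}\otimes(x_kn')$ with $x_k\in\bar R_s$ if $i_k=p^\ell-1$ (here $\bar e_k$ is the $k$-th unit multi-index). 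From this one extracts the pairing identity that is actually used: for $\bar a,\bar b$ in range, $p_{\overline{p^\ell-1}}\bigl(e_{\bar a}\cdot(e_{\bar b}\otimes n')\bigr)=n'$ if $\bar a+\bar b=\overline{p^\ell-1}$ and $0$ otherwise --- because $e_{\bar a}e_{\bar b}=x^{\bar a+\bar b}$, and any coordinate of $\bar a+\bar b$ that reaches $p^\ell$ forces a reduction $x_k^{p^\ell}\mapsto F^\ell(x_k)$ pushing that exponent strictly below $p^\ell-1$, so the top monomial appears precisely when no reduction is needed, i.e.\ when $\bar a+\bar b=\overline{p^\ell-1}$ on the nose.

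With this in hand the verification is short. For well-definedness, $\Phi(g)=p_{\overline{p^\ell-1}}\circ g$ is $\bar R_s$-linear since $g$ is a fortiori $\bar R_s$-linear and $p_{\overline{p^\ell-1}}$ is $\bar R_s$-linear by the diagonality above; for $\Psi$, since $\bar R_t$ is generated over $\bar{\mathbb Z}$ by the $x_k$ it suffices to check $\Psi(h)(x_ky)=x_k\Psi(h)(y)$, and after expanding $x_k\Psi(h)(y)=\sum_{\bar i}(x_ke_{\bar i})\otimes h(e_{\overline{p^\ell-1}-\bar i}y)$ and splitting on whether $i_k<p^\ell-1$ or $i_k=p^\ell-1$ one matches this term by term against $\Psi(h)(x_ky)=\sum_{\bar j}e_{\bar j}\otimes h(x_ke_{\overline{p^\ell-1}-\bar j}y)$, the case $i_k=p^\ell-1$ being exactly where one uses that the scalar $x_k\in\bar R_s$ acts as $x_k^{p^\ell}\in\bar R_t$ to slide it through $h$. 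That $\Phi$ and $\Psi$ are $\bar R_t$-linear for the natural $\bar R_t$-module structures on the two Hom-groups (the target action on the right, and $(a\cdot h)(y)=h(ay)$ on the left) follows formally from $\bar R_t$-linearity of $g$. Finally $\Phi\Psi=\mathrm{id}$ is immediate, as $e_{\overline{p^\ell-1}-\overline{p^\ell-1}}=e_{\bar 0}=1$, and $\Psi\Phi=\mathrm{id}$ reduces to the identity $g_{\bar i}(y)=g_{\overline{p^\ell-1}}\bigl(e_{\overline{p^\ell-1}-\bar i}\,y\bigr)$ for every $g$, which one gets by applying $p_{\overline{p^\ell-1}}$ to $g(e_{\overline{p^\ell-1}-\bar i}y)=e_{\overline{p^\ell-1}-\bar i}\cdot g(y)$, expanding $g(y)=\sum_{\bar j}e_{\bar j}\otimes g_{\bar j}(y)$, and invoking the pairing identity, which annihilates all terms except $\bar j=\bar i$.

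I expect the main obstacle to be purely organizational: keeping the multi-index bookkeeping honest in the $\bar R_t$-linearity check for $\Psi(h)$ --- in particular distinguishing the two occurrences of ``$x_k$'' (the element of $\bar R_t$ acting on $N''$ versus the element of $\bar R_s$ acting on $N'$ after a $p^\ell$-th power reduction) and keeping $\overline{p^\ell-1}-\bar i$ and its shifts inside the index range. Nothing subtler enters; every module-level claim descends from the single algebra-level fact that $(\bar R_t,\sigma)$ is a Frobenius $\bar R_s$-algebra with $\sigma=p_{\overline{p^\ell-1}}$.
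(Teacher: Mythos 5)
The paper itself offers no proof of this statement---it is quoted verbatim as Theorem 3.3 of \cite{gL09}---so there is nothing internal to compare against. Your argument is correct and complete: the pairing identity $p_{\overline{p^\ell-1}}\bigl(e_{\bar a}\cdot(e_{\bar b}\otimes n')\bigr)=n'$ if $\bar a+\bar b=\overline{p^\ell-1}$ and $0$ otherwise is precisely the ``property of the Frobenius map'' underlying the cited result, and your direct verification that $\Phi$ and $\Psi$ are well defined and mutually inverse (including the one delicate point, sliding $x_k$ through $h$ as the scalar $x_k^{p^\ell}$ in the case $i_k=p^\ell-1$) matches the standard argument given there.
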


\begin{definition} \label{D:DualityOfMap}
Let $\beta_j:\bar M\to F^{*^j}(\bar M)$ be the map from Proposition \ref{PA}. Setting $N'=N''=\bar M,$ we denote by $\alpha_j:F_*^j(\bar M)\to \bar M$ the map associated to $\beta_j$ by the isomorphism in Theorem \ref{TD}, namely, $\alpha_j=(\beta_j)_{\overline{p^j-1}}$. 
\end{definition}

Theorem \ref{TD} implies the following.

\begin{corollary}\label{Cdualmap}

(a) In the above notation, $\beta_j=0$ if and only if $\alpha_j=0$. 

(b) Let $m_1,\dots,m_v$ generate $\bar M$ as an $R$-module. The map $\beta_j=0$ if and only if $\alpha_j(x^{i_1}_1\cdots x^{i_n}_nm_t)=0$ for every $t$ and every $(i_1,\cdots,i_n)$ where $0\leq i_q\leq p^j-1$ for every $q$.
\end{corollary}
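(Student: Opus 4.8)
The plan is to deduce both parts from Theorem~\ref{TD} together with elementary properties of the duality isomorphism. For part~(a), I would argue that the isomorphism $\mathrm{Hom}_{\bar R_s}(F^j_*(\bar M),\bar M)\cong\mathrm{Hom}_{\bar R_t}(\bar M,F^{*^j}(\bar M))$ of Theorem~\ref{TD} is in particular a bijection of sets which is additive (it is $\bar R_t$-linear), hence sends $0$ to $0$ and sends nonzero maps to nonzero maps. Since $\alpha_j$ is \emph{defined} to be the image of $\beta_j$ under the inverse of this isomorphism (Definition~\ref{D:DualityOfMap}, $\alpha_j=(\beta_j)_{\overline{p^j-1}}$), the equivalence $\beta_j=0\iff\alpha_j=0$ is immediate. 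The only thing to be slightly careful about is that the pairing in Theorem~\ref{TD} is set up with $N''=N'=\bar M$ and that $\beta_j$ really is an $\bar R_t$-module homomorphism $\bar M\to F^{*^j}(\bar M)$ in the sense required; this was arranged in Section~3 where $\beta_j$ is described explicitly as a map induced on Koszul cohomology, so I would simply cite that.

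For part~(b), the point is to translate the vanishing of the $\bar R_s$-module homomorphism $\alpha_j:F^j_*(\bar M)\to\bar M$ into the vanishing of $\alpha_j$ on a generating set of its source. First I would observe that $F^j_*(\bar M)$ is $\bar M$ regarded as an $\bar R_s$-module via $F^j$, so that $\bar R_s$ acts on $F^j_*(\bar M)$ through $p^j$-th powers; concretely, $x_q\cdot m = x_q^{p^j}m$ for $m\in\bar M$. Therefore an $\bar R_s$-generating set of $F^j_*(\bar M)$ is obtained from the $\bar R$-generating set $m_1,\dots,m_v$ of $\bar M$ by multiplying by the monomials $x_1^{i_1}\cdots x_n^{i_n}$ with $0\leq i_q\leq p^j-1$: indeed every monomial $x_1^{a_1}\cdots x_n^{a_n}$ can be written as $x_1^{i_1}\cdots x_n^{i_n}\cdot(x_1^{b_1}\cdots x_n^{b_n})^{p^j}$ with $a_q=i_q+p^jb_q$ and $0\le i_q<p^j$, and $(x_1^{b_1}\cdots x_n^{b_n})^{p^j}$ is a scalar from $\bar R_s$ acting on $F^j_*(\bar M)$. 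Hence the finitely many elements $x_1^{i_1}\cdots x_n^{i_n}m_t$ span $F^j_*(\bar M)$ over $\bar R_s$, and an $\bar R_s$-linear map out of $F^j_*(\bar M)$ is zero iff it kills each of them. Combining this with part~(a) yields the stated criterion.

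I do not anticipate a genuine obstacle here; the statement is essentially a repackaging of Theorem~\ref{TD}. The one place requiring a little care is the bookkeeping for the $\bar R_s$-module structure on $F^j_*(\bar M)$ in part~(b) — one must not confuse the $\bar R_s$-action (via Frobenius) with the ambient $\bar R$-action, and one must check that the exponents $0\le i_q\le p^j-1$ really give a complete set of coset representatives for the image of $F^j$, which is exactly the division-with-remainder computation $a_q=i_q+p^jb_q$ indicated above. Everything else is formal.
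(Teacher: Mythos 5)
Your proposal is correct and follows the same route as the paper: part (a) from the fact that the isomorphism of Theorem~\ref{TD} sends zero to zero (and only zero to zero), and part (b) from the observation that the elements $x_1^{i_1}\cdots x_n^{i_n}m_t$ with $0\le i_q\le p^j-1$ generate $F_*^j(\bar M)$ over $\bar R_s$. You merely supply the division-with-remainder justification for that generating set, which the paper states without proof.
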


\begin{proof} 
(a) is immediate from the fact that an isomorphism sends zero to zero while (b) follows from (a) and the fact that the set of elements $\{x^{i_1}_1\cdots x^{i_n}_nm_t\}$ generates $F_*^j(\bar M)$ as an $R_s$-module, so $\alpha_j=0$ if and only if $\alpha_j$ sends every generator of $F_*^j(\bar M)$ to zero.
\end{proof}

Theorem \ref{TD} admits the following straightforward extension to complexes.

\begin{corollary}\label{TDcomplexes}
For every complex of $\bar R_t$-modules $\mathcal N''^\bullet$ and for every complex of $\bar R_s$-modules $\mathcal N'^\bullet,$ there is an $\bar R_t$-linear  isomorphism
\begin{align*}
\text{Hom}_{\bar R_s}(F^\ell_*(\mathcal N''^\bullet),\mathcal N'^\bullet) &\cong \text{Hom}_{\bar R_t}(\mathcal N''^\bullet,
F^{*^\ell}(\mathcal N'^\bullet))\\
g^\bullet_{\overline{p^\ell-1}}(-)&\leftarrow
(g^\bullet=\oplus_{\bar{i}}(e_{\bar{i}}\otimes_{\bar R_s}g^\bullet_{\bar{i}}(-)))\\
h^\bullet&\mapsto\oplus_{\bar{i}}(e_{\bar{i}}\otimes_{\bar R_s}h^\bullet(e_{\overline{p^\ell-1}-\bar{i}}
(-))),
\end{align*} where $Hom$ denotes chain maps.
\end{corollary}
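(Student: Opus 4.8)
The plan is to deduce Corollary~\ref{TDcomplexes} from Theorem~\ref{TD} by applying the latter degree by degree and then checking compatibility with differentials. First I would unwind the definitions: a complex $\mathcal N'^\bullet$ of $\bar R_s$-modules has components $\mathcal N'^q$ and differentials $d'^q$, and likewise $\mathcal N''^\bullet$ has components $\mathcal N''^q$ and differentials $d''^q$; the complex $F^{*^\ell}(\mathcal N'^\bullet)$ has components $F^{*^\ell}(\mathcal N'^q)$ with differentials $F^{*^\ell}(d'^q)$ (the functor $F^{*^\ell}$ is exact since $\bar R_t$ is free, hence flat, over $\bar R_s$, but we only need that it is additive and commutes with the formation of complexes), and $F^\ell_*(\mathcal N''^\bullet)$ has components $F^\ell_*(\mathcal N''^q)$ with differentials $F^\ell_*(d''^q)$. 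A chain map $h^\bullet \in \mathrm{Hom}_{\bar R_t}(\mathcal N''^\bullet, F^{*^\ell}(\mathcal N'^\bullet))$ is a collection $(h^q)_q$ of $\bar R_t$-linear maps $h^q:\mathcal N''^q \to F^{*^\ell}(\mathcal N'^q)$ with $F^{*^\ell}(d'^q)\circ h^q = h^{q+1}\circ d''^q$, and similarly on the other side.

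Next I would apply Theorem~\ref{TD} in each cohomological degree $q$, with $N'' = \mathcal N''^q$ and $N' = \mathcal N'^q$, to get $\bar R_t$-linear isomorphisms $\Phi^q:\mathrm{Hom}_{\bar R_s}(F^\ell_*(\mathcal N''^q),\mathcal N'^q) \xrightarrow{\ \sim\ } \mathrm{Hom}_{\bar R_t}(\mathcal N''^q, F^{*^\ell}(\mathcal N'^q))$, given on the level of components by the formulas in Theorem~\ref{TD}. Taking the product over all $q$ gives an isomorphism of the groups of degreewise maps; it remains to show that under this isomorphism, chain maps correspond to chain maps, i.e.\ the subgroup cut out by the chain-map condition on one side maps isomorphically onto the subgroup cut out by the chain-map condition on the other. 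Because $\Phi = \prod_q \Phi^q$ is already a bijection, it suffices to check that $g^\bullet = (g^q)_q$ is a chain map $F^\ell_*(\mathcal N''^\bullet)\to\mathcal N'^\bullet$ if and only if $h^\bullet = \Phi(g^\bullet) = (\Phi^q(g^q))_q$ is a chain map $\mathcal N''^\bullet \to F^{*^\ell}(\mathcal N'^\bullet)$.

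The heart of the argument, and the step I expect to require the most care, is this compatibility check: one must show that the single identity $d'^q\circ g^q = g^{q+1}\circ F^\ell_*(d''^q)$ translates under the componentwise formula $g^q = \oplus_{\bar i}(e_{\bar i}\otimes g^q_{\bar i})$, $h^q = \oplus_{\bar i}(e_{\bar i}\otimes h^q(e_{\overline{p^\ell-1}-\bar i}(-)))$ into the identity $F^{*^\ell}(d'^q)\circ h^q = h^{q+1}\circ d''^q$. The natural way to do this is to observe that the correspondence of Theorem~\ref{TD} is natural in both variables $N'$ and $N''$: a map $\lambda:\mathcal N'^q\to\mathcal N'^{q+1}$ of $\bar R_s$-modules and a map $\mu:\mathcal N''^{q+1}\to\mathcal N''^q$ of $\bar R_t$-modules induce, on the two sides, the evident maps (post-composition with $\lambda$ resp.\ $F^{*^\ell}(\lambda)$, and pre-composition with $F^\ell_*(\mu)$ resp.\ $\mu$), and one checks that $\Phi$ intertwines them. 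Concretely, naturality in $N'$ amounts to the identity $(\lambda\circ g)_{\overline{p^\ell-1}} = \lambda\circ g_{\overline{p^\ell-1}}$, which is immediate from the definition of the $\overline{p^\ell-1}$-component of a direct sum; naturality in $N''$ amounts to the identity, for $h\in\mathrm{Hom}_{\bar R_t}(N'',F^{*^\ell}(N'))$ and $\mu:\tilde N''\to N''$, that the $\bar i$-component of $h\circ\mu$ is $h(e_{\overline{p^\ell-1}-\bar i}\otimes\mu(-))$ — again immediate. Applying naturality in $N'$ with $\lambda = d'^q$ and in $N''$ with $\mu = F^\ell_*(d''^q)$ (using that $\Phi^q$ is $\bar R_t$-linear so that the relevant diagrams make sense), and invoking that $F^{*^\ell}$ sends $d''^q$ to the corresponding differential, one obtains precisely the equivalence of the two chain-map conditions. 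Finally I would remark that the displayed formulas in the statement are literally the degreewise formulas of Theorem~\ref{TD} read off component by component, so no new formula needs to be written down; this is why the corollary is labelled "straightforward." The only genuine content is the bookkeeping that the componentwise bijection respects differentials, which the naturality observation handles cleanly.
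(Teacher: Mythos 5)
Your argument is correct and is exactly the verification the paper leaves implicit (the corollary is stated as a ``straightforward extension'' with no written proof): apply Theorem \ref{TD} degreewise and use the naturality of the isomorphism in both variables --- post-composition by $\lambda$ versus $F^{*^\ell}(\lambda)$, and pre-composition by $F^\ell_*(\mu)$ versus $\mu$, the latter using $\bar R_t$-linearity of $\mu$ to move $e_{\overline{p^\ell-1}-\bar i}$ past it --- to see that the chain-map conditions on the two sides correspond. No gaps.
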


A chain map $g^\bullet: \mathcal N''^\bullet\to F^{*^\ell}(\mathcal N'^\bullet)$ induces a map $$g^i:H^i(\mathcal N''^\bullet)\to H^i(F^{*^\ell}(\mathcal N'^\bullet))\cong F^{*^\ell}(H^i(\mathcal N'^\bullet))$$ on cohomology where the isomorphism follows from the fact that $F^*$ is an exact functor. Let $h^\bullet:F_*^\ell(\mathcal N''^\bullet)\to \mathcal N'^\bullet$ be the chain map that corresponds to $g^\bullet$ under the isomorphism of Corollary \ref{TDcomplexes}. The chain map $h^\bullet$ induces a map $$h^i:H^i(F^\ell_*(\mathcal N''^\bullet))\cong F_*^\ell(H^i(\mathcal N''^\bullet))\to H^i(\mathcal N'^\bullet)$$ on cohomology where the isomorphism follows from the fact that $F_*$ is an exact functor. It is straightforward from the definitions and the exactness of the functors $F^*$ and $F_*$ that $h^i$ is the map associated to the map $g^i$ by the isomorphism of Theorem \ref{TD} (upon setting $N''=H^i(\mathcal N''^\bullet)$ and $N'=H^i(\mathcal N'^\bullet)$).

Let $$\alpha_j^\bullet: F_*^j(K^\bullet(\bar R;\bar f_1,\dots,\bar f_s))\to K^\bullet(\bar R;\bar f_1,\dots,\bar f_s)$$ be the chain map associated to the above chain map $\beta_j^\bullet$ by the isomorphism of Corollary \ref{TDcomplexes}. 
It follows that the map $\alpha_j:F_*^j(\bar M)\to \bar M$ induced on cohomology by the chain map $\alpha_j^\bullet$ is precisely the map associated to $\beta_j:\bar M\to F^{*^j}(\bar M)$ by the isomorphism of Theorem \ref{TD}. Thus to compute $\alpha_j(m)\in \bar M$ for some $m\in F^j_*(\bar M),$ one can take a cocycle $\tilde m\in F_*^j(K^i(\bar R;\bar f_1,\dots,\bar f_s))$ that represents $m\in F^j_*(\bar M)$, compute its image in $K^i(\bar R;\bar f_1,\dots,\bar f_s)$ via the chain map $\alpha_j^\bullet$ and take the class of this image in the $i$-th cohomology of $K^i(\bar R;\bar f_1,\dots,\bar f_s)$, i.e., in $\bar M$. This class would be $\alpha_j(m)$.

Let $m_1,\dots,m_v\in M$ generate $M$ as an $R$-module. Let $\bar m_t\in \bar M=M/pM$ be the image of $m_t$ under the natural map $M\to M/pM$. Clearly, $\bar m_1\dots, \bar m_v$ generate $\bar M$ as an $\bar R$-module. According to Corollary \ref{Cdualmap}, the map $\beta_j$ is the zero map if and only if $\alpha_j(x^{i_1}_1\cdots x^{i_n}_n\bar m_t)=0$ for every $t\leq v$ and every $(i_1,\cdots,i_n)$ where $0\leq i_q\leq p^j-1$ for every $q$. Pick a linear ordering of all the $(n+1)$-tuples $(i_1,\dots, i_n, t)$ in such a way that every tuple determines the next tuple in the ordering (i.e. no additional information is required to determine the next tuple). For example, one can order all these tuples lexicographically. Our algorithm consists in deciding, for every tuple $(i_1,\dots, i_n, t)$, whether or not $\alpha_j(x^{i_1}_1\cdots x^{i_n}_n\bar m_t)=0$. If $\alpha_j(x^{i_1}_1\cdots x^{i_n}_n\bar m_t)\ne 0$ for some tuple, the algorithm stops and returns the answer that $\beta_j$ does not vanish. If $\alpha_j(x^{i_1}_1\cdots x^{i_n}_n\bar m_t)=0$, the algorithm moves to the next tuple in the ordering and all the information about the calculations for the preceding tuple is erased from memory (it is not used the subsequent calculations). There are only finitely many tuples to consider, so the algorithm eventually stops. If a tuple $(i_1,\dots, i_n, t)$ with $\alpha_j(x^{i_1}_1\cdots x^{i_n}_n\bar m_t)\ne 0$ is never encountered, the algorithm reports that $\beta_j=0$. Thus the algorithm computes whether or not $\alpha_j(x^{i_1}_1\cdots x^{i_n}_n\bar m_t)=0$ one tuple $(i_1,\dots, i_n, t)$ at a time and the memory it consumes (modulo some finite amount that does not depend on the prime integer $p$ and is required to store the generators $m_1,\dots,m_v$ of $M$ and the current tuple $(i_1,\dots, i_n, t)$) is the memory required to decide whether or not $\alpha_j(x^{i_1}_1\cdots x^{i_n}_n\bar m_t)=0$ for just one individual tuple $(i_1,\dots, i_n, t)$.

The above considerations reduce the problem of deciding whether the map $\beta_j$ vanishes to deciding for a fixed tuple $(i_1,\dots, i_n, t)$ whether $\alpha_j(x^{i_1}_1\cdots x^{i_n}_n\bar m_t)=0$. Let $\tilde m_t\in K^i(R; f_1,\dots, f_s)$ be a cocycle that represents $m_t$ in the $i$-th cohomology module of $K^\bullet(R; f_1,\dots, f_s)$, i.e., in $M$. Let $\overline{\tilde m_t}\in K^i(\bar R; \bar f_1,\dots, \bar f_s)$ be the image of $\tilde m_t$ via the natural map $K^i(R; f_1,\dots, f_s)\to K^i(R; f_1,\dots, f_s)/pK^i(R; f_1,\dots, f_s)\cong K^i(\bar R; \bar f_1,\dots, \bar f_s).$ Clearly, $\overline{\tilde m_t}\in K^i(\bar R; \bar f_1,\dots, \bar f_s)$ is a cocycle that represents $\bar m_t$ in the $i$-th cohomology module of $K^\bullet(\bar R; \bar f_1,\dots, \bar f_s)$, i.e., in $\bar M$. As has been explained above, $\alpha_j(x^{i_1}_1\cdots x^{i_n}_n\bar m_t)\in \bar M$ is the element of $\bar M$, the $i$-th cohomology module of $K^i(\bar R;\bar f_1,\dots, \bar f_s)$, represented by the cocycle $\alpha^\bullet_j(x^{i_1}_1\cdots x^{i_n}_n\overline{\tilde m_t})\in K^i(\bar R; \bar f_1,\dots, \bar f_s).$ Thus the problem of deciding whether $\alpha_j(x^{i_1}_1\cdots x^{i_n}_n\bar m_t)=0$ reduces to first computing the cocycle $\alpha^\bullet_j(x^{i_1}_1\cdots x^{i_n}_n\overline{\tilde m_t})\in K^i(\bar R; \bar f_1,\dots, \bar f_s)$ and then deciding whether this cocycle represents the zero element in the cohomology module, i.e., whether this cocycle is a coboundary.

The module $K^i(\bar R;\bar f_1,\dots,\bar f_s)$ is a direct sum of copies of the module $\bar R$ indexed by ordered tuples $\{v_1,\dots,v_i\}$. The map $\alpha_j^\bullet$ is diagonal with respect to this direct sum decomposition, i.e., the image of $F_*^j(R_{v_1,\dots,v_i})\subseteq F_*^j(K^i(\bar R;\bar f_1,\dots,\bar f_s))$ via this map is in $\bar R_{v_1,\dots,v_i}\subseteq K^i(\bar R;\bar f_1,\dots,\bar f_s)$. In other words, the $i$-th component of the chain map $\alpha_j^\bullet$ is the direct sum of maps $\alpha_{j,v_1,\dots,v_i}:F_*^j(\bar R_{v_1,\dots,v_i})\to \bar R_{v_1,\dots,v_i}$, one map for each tuple $\{v_1,\dots, v_i\}$. Let $\overline{\tilde m}_{t,v_1,\dots,v_i}\in F_*^j(\bar R_{v_1,\dots,v_i})$ be the component of $\overline{\tilde m_t}$ in $F_*^j(\bar R_{v_1,\dots,v_i})$. The $\bar R_{v_1,\dots,v_i}$-component of the element $\alpha_j^\bullet(x^{i_1}_1\cdots x^{i_n}_n\overline{\tilde m_t})$ of $K^i(\bar R; \bar f_1,\dots, \bar f_s)$ is $\alpha_{j, v_1,\dots,v_i}(x^{i_1}_1\cdots x^{i_n}_n\overline{\tilde m}_{t, v_1,\dots, v_i})$. Thus in order to compute $\alpha_j^\bullet(x^{i_1}_1\cdots x^{i_n}_n\overline{\tilde m_t}),$ it is enough to compute $\alpha^\bullet_{j, v_1,\dots,v_i}(x^{i_1}_1\cdots x^{i_n}_n\overline{\tilde m}_{t, v_1,\dots, v_i})$ for every tuple $\{v_1,\dots,v_i\}$. The number of tuples $\{v_1,\dots,v_i\}$ is finite and does not depend on $p$. We are going to describe an algorithm to compute $\alpha^\bullet_{j, v_1,\dots,v_i}(x^{i_1}_1\cdots x^{i_n}_n\overline{\tilde m}_{t, v_1,\dots, v_i})$ for a fixed tuple $\{v_1,\dots,v_i\}$.

The map $\alpha^\bullet_{j,v_1,\dots,v_i}:F_*^j(\bar R_{v_1,\dots,v_i})\to \bar R_{v_1,\dots,v_i}$ is the map associated via Theorem \ref{TD} to the map $\beta^\bullet_{j,v_1,\dots,v_i}:\bar R\cong \bar R_{v_1,\dots,v_i}\to F^{*^j}(\bar R_{v_1,\dots,v_i})\cong \bar R$ which is nothing but the multiplication by $(f_{v_1}\cdots f_{v_i})^{p^j-1}$, as has been explained near the beginning of this section. Now for an element $y\in \bar R_{v_1,\dots,v_i}$ write $\beta^\bullet_{j,v_1,\dots,v_i}(y)=y(f_{v_1}\cdots f_{v_i})^{p^j-1}$ as $\bigoplus_{\bar i}e_{\bar i}g_{\bar i}^{p^j}$ where $g_{\bar i}\in \bar R_{v_1,\dots,v_i}$ and $e_{\bar{i}}=x^{i_1}_1\cdots x^{i_n}_n$
with \hbox{$0\leqslant i_j<p^j$} for every $j$ (every polynomial in $\bar R$ may be uniquely written in this way). By definition, $\alpha^\bullet_{j, v_1,\dots,v_i}(y)=g_{\overline{p^j-1}}.$ Setting $y=x^{i_1}_1\cdots x^{i_n}_n\overline{\tilde m}_{t, v_1,\dots, v_i}$ in this description, one gets $\alpha^\bullet_{j, v_1,\dots,v_i}(x^{i_1}_1\cdots x^{i_n}_n\overline{\tilde m}_{t, v_1,\dots, v_i})$.

Both $(f_{v_1}\cdots f_{v_i})$ and $\overline{\tilde m}_{t, v_1,\dots, v_i}$ are polynomials in $x_1,\dots, x_n$ with coefficients in $\mathbb Z/p\mathbb Z$. Let $\mathfrak m_1,\dots \mathfrak m_t$ and $\mu_1,\dots, \mu_u$ be the monomials in $x_1,\dots, x_n$ with coefficients in $\mathbb Z/p\mathbb Z$ that appear in $(f_{v_1}\cdots f_{v_i})$ and in $\overline{\tilde m}_{t, v_1,\dots, v_i}$ respectively, that is $f_{v_1}\cdots f_{v_i}=\mathfrak m_1+\dots+\mathfrak m_t$ and $\overline{\tilde m}_{t, v_1,\dots, v_i}=\mu_1+\dots+\mu_u$. Every monomial $x^{i_1}_1\cdots x^{i_n}_n\mathfrak m_1^{q_1}\cdots \mathfrak m_t^{q_t}\mu_\tau$ may be written as a monomial in the variables, i.e., $\mathfrak m_1^{q_1}\cdots \mathfrak m_t^{q_t}\mu_\tau=cx_1^{\ell_1}\cdots x_n^{\ell_n}$ where $c\in \mathbb Z/p\mathbb Z$. Define the monomial $\gamma(\mathfrak m_1^{q_1}\cdots \mathfrak m_t^{q_t}\mu_\tau)=\gamma(cx_1^{\ell_1}\cdots x_n^{\ell_n})$ as follows: $\gamma(\mathfrak m_1^{q_1}\cdots \mathfrak m_t^{q_t}\mu_\tau)=0$ if $\ell_s$ is not congruent to $p^j-1$ modulo $p^j$ for some $s$ and $\gamma(\mathfrak m_1^{q_1}\cdots \mathfrak m_t^{q_t}\mu_\tau)=cx_1^{w_1}\cdots x_n^{w_n}$ where each $w_s=\frac{\ell_t-(p^j-1)}{p^j}$ otherwise. With this notation $\alpha^\bullet_{j, v_1,\dots,v_i}(x^{i_1}_1\cdots x^{i_n}_n\overline{\tilde m}_{t, v_1,\dots, v_i})$ equals the summation of $\gamma(x^{i_1}_1\cdots x^{i_n}_n\mathfrak m_1^{q_1}\cdots \mathfrak m_t^{q_t}\mu_\tau)$ over all the monomials $\mathfrak m_1^{q_1}\cdots \mathfrak m_t^{q_t}\mu_\tau$ of total degree $q_1+\cdots+q_t=p^j-1$.

The algorithm we have consists in going through all the monomials $\mathfrak m_1^{q_1}\cdots \mathfrak m_t^{q_t}\mu_\tau $ of total degree $q_1+\cdots+q_t=p^j-1$, computing $\gamma(m_1^{q_1}\cdots m_t^{q_t}\mu_j)$ for each of them and taking their sum. More precisely, pick a well-ordering of all the monomials $\mathfrak m_1^{q_1}\cdots \mathfrak m_t^{q_t}\mu_\tau $ of total degree $q_1+\cdots+q_t=p^j-1$ in such a way that every monomial determines the next monomial in the well-ordering (i.e. no additional information is required to determine the next monomial). For example one can order all these monomials lexicographically. Dedicate a section of the memory to record partial sums of the $\gamma(m_1^{q_1}\cdots m_t^{q_t}\mu_\tau)$s. Once another $\gamma(m_1^{q_1}\cdots m_t^{q_t}\mu_\tau)$ is computed, it is added to the old partial sum and stored in its place, while the old partial sum is erased. We perform this step for each monomial $\mathfrak m_1^{q_1}\cdots \mathfrak m_t^{q_t}\mu_\tau$ in the well-ordering. Once one step is completed, we move on to the next step by passing to the next monomial in the well-ordering. The computation is completed when all the monomials $\mathfrak m_1^{q_1}\cdots \mathfrak m_t^{q_t}\mu_\tau$ in the well-ordering are exhausted. This completes the description of the computation of $\alpha^\bullet_{j, v_1,\dots,v_i}(x^{i_1}_1\cdots x^{i_n}_n\overline{\tilde m}_{t, v_1,\dots, v_i})$.

The next step in the algorithm is deciding whether the cocycle $\alpha^\bullet_{j}(x^{i_1}_1\cdots x^{i_n}_n\overline{\tilde m}_{t})$ represents the zero element in cohomology, i.e., whether this cocycle is a coboundary. Since $$\alpha^\bullet_{j}(x^{i_1}_1\cdots x^{i_n}_n\overline{\tilde m}_{t})=\bigoplus_{v_1,\dots,v_i}\alpha^\bullet_{j, v_1,\dots,v_i}(x^{i_1}_1\cdots x^{i_n}_n\overline{\tilde m}_{t, v_1,\dots, v_i})$$ and we have shown how to compute $\alpha^\bullet_{j, v_1,\dots,v_i}(x^{i_1}_1\cdots x^{i_n}_n\overline{\tilde m}_{t, v_1,\dots, v_i})$ for all ordered tuples $\{v_1,\dots, v_i\},$ standard techniques can be used to accomplish this task. Finally, if the map $\alpha_j$ is a zero map, then the map $\beta_j$ is a zero map by Corollary \ref{Cdualmap}. This completes the description of the algorithm for deciding whether the map $\beta_j:\bar M\to F^{*^j}(\bar M)$, for a fixed $j$, is the zero map.

Next we discuss the amount of memory required to perform this algorithm. The computation of the cocycle $\displaystyle \alpha^\bullet_{j}(x^{i_1}_1\cdots x^{i_n}_n\overline{\tilde m}_{t})=\bigoplus_{v_1,\dots,v_i}\alpha^\bullet_{j, v_1,\dots,v_i}(x^{i_1}_1\cdots x^{i_n}_n\overline{\tilde m}_{t, v_1,\dots, v_i})$ and deciding whether this cocycle is a coboundary for a fixed element $x^{i_1}_1\cdots x^{i_n}_n\overline{\tilde m}_{t}$ are independent of such computations for all other elements $x^{i'_1}_1\cdots x^{i'_n}_n\overline{\tilde m}_{t'}$ and the only information from one such computation that could be needed for the continuation of the algorithm is the string $(i_1,\dots, i_n,t)$. 

The computation of the element $\alpha^\bullet_{j, v_1,\dots,v_i}(x^{i_1}_1\cdots x^{i_n}_n\overline{\tilde m}_{t, v_1,\dots, v_i})$ consists of a sequence of steps, one step for each monomial $\mathfrak m_1^{q_1}\cdots \mathfrak m_t^{q_t}\mu_\tau$ of total degree $q_1+\cdots+q_t=p^j-1$, as explained above. The arithmetic operations one has to perform are the same in every step and the information that has to be kept in memory after performing one step is the string $(q_1,\dots, q_t,\tau)$ and the partial sum of the $\gamma(x^{i_1}_1\cdots x^{i_n}_n\mathfrak m_1^{q_1}\cdots \mathfrak m_t^{q_t}\mu_\tau)$s.  Each $\gamma(x^{i_1}_1\cdots x^{i_n}_n\mathfrak m_1^{q_1}\cdots \mathfrak m_t^{q_t}\mu_\tau)$, if non-zero, is a polynomial in $x_1,\dots, x_n$ of degree $$\frac{\sum_si_s+\sum_sq_s{\rm deg}\mathfrak m_s+{\rm deg}\mu_\tau-(p^j-1)n}{p^j}$$ 
Setting $D={\rm max\ deg}\mu_\tau$ and $d={\rm deg}(f_{v_1}\cdots f_{v_i})$ and taking into account that $\sum_si_s\leq (p^j-1)n$ and $\sum_sq_s{\rm deg}\mathfrak m_s\leq d({p^j-1}),$ the above fraction is bounded above by $$\frac{d(p^j-1)+D}{p^j}=d+\frac{D-d}{p^j}\leq {\rm max}\{D,d\},$$ which is a constant independent of $p$ and of the string $(i_1,\dots, i_n,q_1,\dots, q_t,\tau)$. Thus each $\gamma(x^{i_1}_1\cdots x^{i_n}_n\mathfrak m_1^{q_1}\cdots \mathfrak m_t^{q_t}\mu_\tau)$ and hence each partial sum of these is a polynomial whose degree is bounded above by a constant independent of $p$. Thus the amount of memory required to compute $\gamma(x^{i_1}_1\cdots x^{i_n}_n\mathfrak m_1^{q_1}\cdots \mathfrak m_t^{q_t}\mu_\tau)$ and memorize the resulting partial sum grows only inasmuch as one needs to store bigger and bigger coefficients of the polynomial which is the partial sum (the number of coefficients doesn't grow because the degree doesn't grow). These coefficients are elements of $\mathbb Z/p\mathbb Z$ and the amount of memory required to store those coefficients grows linearly with respect to $p$. Hence the amount of memory required to compute the element $\alpha^\bullet_{j, v_1,\dots,v_i}(x^{i_1}_1\cdots x^{i_n}_n\overline{\tilde m}_{t, v_1,\dots, v_i})$ grows linearly with respect to $p$.

The cocycle $\displaystyle \alpha^\bullet_{j}(x^{i_1}_1\cdots x^{i_n}_n\overline{\tilde m}_{t})=\bigoplus_{v_1,\dots,v_i}\alpha^\bullet_{j, v_1,\dots,v_i}(x^{i_1}_1\cdots x^{i_n}_n\overline{\tilde m}_{t, v_1,\dots, v_i})$ is an element of $K^i(\bar R;\bar f_{1},\dots, \bar f_{s})$ whose component in $\bar R_{v_1,\dots, v_i}$ is a polynomial of degree bounded above by a constant independent of $p$ and of the string $(i_1,\dots, i_n)$. The modules $K^i(\bar R;\bar f_{1},\dots, \bar f_{s})$ are free $\bar R$-modules of finite rank and the entries of the matrices defining the differentials in $K^\bullet(\bar R;\bar f_{1},\dots, \bar f_{s})$ are polynomials of $\bar R$ whose degrees do not increase with $p$. Thus the number of arithmetic operations one has to perform in order to decide whether the cocycle $\alpha^\bullet_{j}(x^{i_1}_1\cdots x^{i_n}_n\overline{\tilde m}_{t})$ is a coboundary does not increase with $p$. Hence the amount of memory required to decide whether the cocycle $\alpha^\bullet_{j}(x^{i_1}_1\cdots x^{i_n}_n\overline{\tilde m}_{t})$ is a coboundary grows only inasmuch as one needs to store bigger and bigger elements of the field $\mathbb Z/p\mathbb Z$ that appear in those arithmetic operations. The amount of memory required to store elements of $\mathbb Z/p\mathbb Z$ grows linearly with respect to $p$. This, finally, shows that the amount of memory required to decide whether the map $\beta_j:\bar M\to F^{*^j}(\bar M)$ vanishes grows linearly with respect to $p$.

Needless to say, the above algorithm is far from being practical. Even though the required memory grows only linearly, the number of arithmetic operations one has to perform grows very rapidly. This is because the same arithmetic operations have to be performed for every monomial $x_1^{i_1}\cdots x_n^{i_n}$ with $i_t\leq p^j-1$ and every monomial $\frak m_1^{q_1}\cdots\frak m_t^{q_t}$ with $\sum_tq_t=p^j-1$. The number of these monomials grows very rapidly with $p$ making the time required to complete the computation astronomical.

In conclusion we briefly summarize our modification of the algorithm  from
\cite[Remark 2.4]{gL97} for deciding the vanishing of the local cohomology module $H^i_{\bar I}(\bar R)$ where $\bar I=(\bar f_1,\dots,\bar f_s)$. First one computes the integer $u=u(M)$ (as defined in Corollary \ref{upperboundonr}) where $M$ is the $i$-th cohomology module of the Koszul complex $K^\bullet (R;f_{1},\dots, f_{s})$. According to Corollary \ref{upperboundonr} the local cohomology module $H^i_{\bar I}(\bar R)$ (which is the $F$-finite module generated by the map $\beta:\bar M\to F^*(\bar M)$) vanishes if and only if the map $\beta_u:\bar M\to F^{*^u}(\bar M)$ is the zero map. Thus all one has to do is apply our algorithm for deciding whether the map $\beta_j:\bar M\to F^{*^j}(\bar M)$ vanishes for $j=u$.

\begin{acknowledgement} 
Part of this work is from the author's dissertation. The author gratefully thanks his advisor Professor Gennady Lyubeznik for his continued support and guidance.
\end{acknowledgement}

\end{document}